\newcommand{\jonly}[1]{}
\newcommand{\arxiv}[1]{#1}
\def\Hom{\mathop{\fam0 Hom}}
\def\Ext{\mathop{\fam0 Ext}}
\def\rk{\mathop{\fam0 rk}}
\def\Int{\mathop{\fam0 Int}}
\def\t{\widetilde}
\def\R{{\mathbb R}} \def\Z{{\mathbb Z}} \def\Hc{{\mathbb H}}  \def\C{\Bbb C} \def\Q{\Bbb Q}
\let\Bbb=\mathbb
\theoremstyle{theorem}
\newtheorem{theorem}{Theorem}[section]
    \newtheorem{lemma}[theorem]{Lemma}
    \newtheorem{corollary}[theorem]{Corollary}
    \newtheorem{proposition}[theorem]{Proposition}
    \newtheorem{addendum}[theorem]{Addendum}
\theoremstyle{definition}
\newtheorem{remark}[theorem]{Remark}
\newtheorem{example}[theorem]{Example}
\newtheorem{problem}[theorem]{Problem}
\newtheorem{conjecture}[theorem]{Conjecture}
\begin{document}

\title{Embeddings of $k$-complexes in $2k$-manifolds, and minimum rank of partial symmetric matrices\footnote{I am grateful for useful discussions to A. Bikeev, S. Dzhenzher, R. Fulek, T. Garaev, R. Karasev, A. Kliaczko, E. Kogan, M. Tancer, I. Zhiltsov, and anonymous referees. 
Supported by Russian Science Foundation, Grant N 25-21-00685. }}

\author{A. Skopenkov}

\date{}

\maketitle

\begin{abstract}
Let $K$ be a $k$-dimensional simplicial complex having $n$ faces of dimension $k$,
and $M$ a closed orientable $(k-1)$-connected PL $2k$-dimensional manifold.
We prove that {\it for $k\ge3$ odd, $K$ embeds into $M$ if and only if there are

$\bullet$ a skew-symmetric $n\times n$ matrix $A$  with integer entries, whose rank over $\Q$ does not exceed
$\rk H_k(M;\Z)$,

$\bullet$ a general position PL map $f:K\to\R^{2k}$, and

$\bullet$ orientations on $k$-faces of $K$

such that for any disjoint $k$-dimensional faces $\sigma,\tau$ of $K$ the entry $A_{\sigma,\tau}$ equals the algebraic intersection of $f\sigma$ and $f\tau$.}

We prove some analogues of this result (for any parity of $k$), including those for $\Z_2$- and $\Z$-embeddability.
Our results generalize the Bikeev-Fulek-Kyn\v cl
criteria for the $\Z_2$- and  $\Z$-embeddability of graphs to surfaces, and are related to the Harris-Krushkal-Johnson-Pat\'ak-Tancer criteria for the embeddability of $k$-complexes into $2k$-manifolds.
The main novelty of this paper is passing from the cohomology condition of Pat\'ak-Tancer to the simpler
\emph{extendability of some intersection function to a low-rank matrix}
(defined in the paper using the idea of Fulek-Kyn\v cl).
\end{abstract}

\noindent
{\em MSC 2010}: 57Q35, 55S35, 15A83.

\jonly{\noindent
{\em Keywords:} embedding, $\Z_2$-embedding, $\Z$-embedding, low-rank matrix completion, intersection form, intersection cocycle, van Kampen obstruction.}

\tableofcontents

\section{Introduction and main results}\label{ss:introd}

{\bf Informal description of main results.}

The study of graph drawings on 2-dimensional surfaces is an active area of mathematical research.
Higher-dimensional generalization is classical, and has attracted some attention recently, see Remark \ref{r:mot}.bcf.
In this paper a {\bf $k$-complex} is a set of some closed at most $k$-dimensional faces of some simplex (a longer term is $k$-dimensional finite simplicial complex).
We identify this set with the union of those faces (i.e. with the \emph{body} of the $k$-complex).
Just like for $k=1$, it is known and simple that any $k$-complex embeds into {\it some} $2k$-dimensional manifold.
A major problem asks if there exists an algorithm for recognizing the embeddability of $k$-complexes into a {\it given} $2k$-dimensional manifold.
A quick algorithm based on a beautiful mathematical result is (as always) preferable.
There are classical algorithms for $k=1$ and an arbitrary 2-manifold, or for $k\ge3$ and the manifold being a sphere (or a ball or $\R^{2k}$), see e.g. survey \cite[\S1]{MTW}.
For $k=2$, or for $k\ge3$ and a closed manifold different from $S^{2k}$, no algorithm is known.

Our main results are criteria for embeddability (and for the $\Z_2$- and $\Z$-embeddability defined below)
of $k$-complexes to $2k$-dimensional manifolds (Corollaries \ref{c:oddk} and \ref{c:su-mohi}, Theorem \ref{t:matrix}).
We reduce embeddability to finding minimal rank of a matrix whose entries can be changed subject to some linear conditions.
This is a version of `low rank matrix completion problem' related to the Netflix problem from machine learning, and extensively studied in computer science, see e.g. \cite{Ko21}, surveys \cite{MC, NKS}, and the introduction \cite{DGN+} accessible to students.

These criteria allow to prove new interesting Corollary \ref{c:rkcom}.b, and \cite[Theorem 2.4.2]{KS21e} due to E. Kogan \cite{Ko}.
We also present Corollary
\ref{c:rkcom}.ac proved using an essentially known criterion.


\bigskip
{\bf Notation and conventions.}

In this paper, unless otherwise indicated, we consider only piecewise linear (PL) manifolds and maps (thus we mostly omit `PL').
The analogues of
Corollaries \ref{c:oddk} and \ref{c:su-mohi}.a are correct for topological embeddings (by the PL approximation theorem \cite[Theorem 1]{Br72}\arxiv{, cf. \cite[Remark 1.4.b]{DS22}}).
We shorten `compact manifold, possibly with boundary' to `manifold', and `$k$-dimensional (face, manifold, etc)' to `$k$- (face, manifold, etc)'.
Further, $k$ is any positive integer, $K$ is any  $k$-complex, $n=n_K$ is the number of $k$-faces (for $k=1$ edges) of $K$, and faces are closed.


A map $f\colon K\to M$ to a $2k$-manifold is said to be a \textbf{general position map} if

$\bullet$ any two vertex-disjoint faces the sum of whose dimensions is less than $2k$ have disjoint images,

$\bullet$ the restriction of $f$ to any $k$-face has a finite set of self-intersection points,

$\bullet$ for any vertex-disjoint $k$-faces $\sigma,\tau$

\quad --- the set $f\sigma \cap f\tau$ is finite and is disjoint with self-intersections of $f|_\sigma$ and $f|_\tau$,

\quad --- for any point $y\in f\sigma \cap f\tau$, and some small $(2k-1)$-sphere $S$ centered at $y$, 
the intersections $S\cap f\sigma$ and $S\cap f\tau$ are $(k-1)$-spheres for which the absolute value of linking number in $S$ is $1$.

\begin{figure}[ht]
\centerline{\includegraphics[width=4.5cm]
{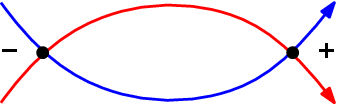} \qquad
\includegraphics{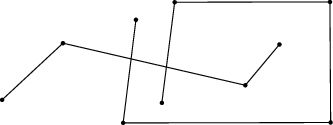}}
\caption{Two curves intersecting at an even number of points the sum of whose intersection signs is zero (left) or non-zero (right).}
\label{f:curves}
\end{figure}

If $\sigma,\tau$ and $M$ are oriented, then the $\pm1$ linking number is defined; it is called \textit{the intersection sign} of $y$ 
(Figure \ref{f:curves}); the linking number does not depend on the above sphere $S$.

Denote by $\Delta_N$ the $N$-simplex, and by $\Delta_N^j$ the union of the $j$-faces of $\Delta_N$.

Let $h:\Delta_k\sqcup \Delta_k'\to M$ be a general position map to an oriented $2k$-manifold, where $\Delta_k'$ is a copy of $\Delta_k$.
The \textbf{algebraic intersection number} $h\Delta_k\cdot h\Delta_k'\in\Z$ is
the sum of the intersection signs of all intersection points from $h\Delta_k\cap h\Delta_k'$.
Observe that $h\Delta_k\cdot h\Delta_k' = (-1)^kh\Delta_k'\cdot h\Delta_k$.


Denote by $\t K$ the set of all ordered pairs of non-adjacent $k$-faces of $K$.
Functions $\t K\to\Z$ can be regarded as `partial matrices', i.e. as arrangements of integers in those cells of an $n\times n$ matrix that correspond to the pairs from $\t K$.
A function $\varphi:\t K\to\Z$ is called an \textbf{intersection function for $K$} if there are a general position map $f:K\to\R^{2k}$ and orientations on $k$-faces of $K$ such that 
$$\varphi(\sigma,\tau)=f\sigma\cdot f\tau\quad\text{for every}\quad(\sigma,\tau)\in\t K.$$
Observe that any intersection function for $K$
is \emph{$(-1)^k$-symmetric}, i.e. $\varphi(\sigma,\tau) = (-1)^k\varphi(\tau,\sigma)$ for every $(\sigma,\tau)\in\t K$.

\begin{proposition}\label{p:intfun}
There are functions $\nu_0,\delta_1,\ldots,\delta_s:\t K\to\Z$ effectively calculated from $K$, and such that
$\varphi:\t K\to\Z$ is an intersection function for $K$ if and only if there are integers $s,n_1,\ldots,n_s$ for which $\varphi=\nu_0+n_1\delta_1+\ldots+n_s\delta_s$.
\end{proposition}

This holds by the case $M=\R^{2k}$ of Lemma \ref{l:zstar}.
Propositions \ref{p:intfun} and \ref{p:addi} are simple, so they should be considered known.
By Proposition \ref{p:intfun} it is algorithmically decidable if given function $\varphi:\t K\to\Z$ is an intersection function for given $K$.

A complex $X$ is said to be \emph{$s$-connected} if for every $j=0,1,\ldots,s$ any continuous map
$S^j\to X$ of the $j$-sphere extends to a continuous map $D^{j+1}\to X$ of the $(j+1)$-disk.

Let $G$ be either $\Z_2$ or $\Z$, and $M$ a $2k$-manifold.
For simple (accessible to non-specialists in topology) definitions of
the \textit{homology group} $H_k(M;G)$ and the \textit{intersection form}
$\cap_{M;G} : H_k(M;G)\times H_k(M;G) \to G$ see \cite[\S1.1, \S5.3]{Pr07}, \cite[\S10.6, \S10.7]{Sk20}, \cite{HG}, \cite[\S2]{IF}. 
We omit $\Z_2$-coefficients from the notation of homology groups and intersection forms.
By the rank
of an integer matrix, or of a bilinear form on $\Z^a$, we mean its rank over $\Q$.
We denote the ranks of the intersection forms by $\rk M$  and $\rk_\Z M$.\footnote{\label{f:rank} E.g. if $M$ is the connected sum of $s$ copies of $S^k\times S^k$, then $\rk M=\rk_\Z M=2s$; also \linebreak $\rk \C P^2 = \rk_\Z\C P^2 = \rk\Hc P^2 = \rk_\Z\Hc P^2 = 1$.}


\bigskip
\textbf{Main results for embeddings and $\Z$-embeddings.}

\begin{corollary}[of Theorem \ref{t:matrixz}]\label{c:oddk}
Let $k\ge3$ be an odd integer, and $M$ a closed orientable $(k-1)$-connected $2k$-manifold.
The complex $K$ embeds into $M$ if and only if
some intersection function for $K$ extends to an $n\times n$ skew-symmetric integer matrix
whose rank does not exceed $\rk_\Z M$.
\end{corollary}

This follows by Theorems \ref{t:vkswu}.a and \ref{t:matrixz}, skew-symmetry of $\cap_{M;\Z}$ for odd $k$, the fact that any unimodular skew-symmetric bilinear form over $\Z$ is isomorphic to the symplectic form \cite{MH73} (see also \cite[Proposition 6.2]{IF}), and Lemma \ref{l:Z_Hg}.
It is not clear whether the condition of Corollary \ref{c:oddk} is algorithmically decidable (cf.  Proposition \ref{p:intfun}).

\begin{conjecture}\label{cj:ifzm}
Assume that $k\ge3$ is odd, and $M$ is an orientable $(k-1)$-connected $2k$-manifold.
The complex $K$ embeds into $M$ if and only if
some intersection function for $K$ extends to an $n\times n$ skew-symmetric integer matrix
whose rank does not exceed $r:=\rk_\Z M$, and whose every $r\times r$ minor is divisible by the determinant of the matrix of $\cap_{M;\Z}$ in some basis of $H_k(M;\Z)$.
\end{conjecture}

This would follow from Theorems \ref{t:vkswu}.a and \ref{t:matrixz} together with Conjecture \ref{c:skew}.

Corollaries \ref{c:oddk} and \ref{c:su-mohi}.a could
be useful for algorithmic questions \cite[Problem 26]{PT19}.

\begin{corollary}[of Theorem \ref{t:matrixz}]\label{c:su-mohi} (a) A 4-complex $K$ embeds into
the quaternionic projective space $\Hc P^2$ if and only if the following holds.


\quad($EK$) Some intersection function for $K$ extends to a rank 1 symmetric integer $n\times n$ matrix
whose any diagonal element is the square of an integer.

(b) A 2-complex $K$ has a $\Z$-embedding (defined below) to the complex projective space $\C P^2$ if and only if ($EK$) holds.
\end{corollary}

For an orientable $2k$-manifold $M$, a general position map $h:K\to M$ is called a {\bf $\Z$-embedding} if
$h\sigma\cdot h\tau=0$ for any pair $\sigma,\tau$ of non-adjacent $k$-faces.
(The sign of $h\sigma\cdot h\tau$ depends on an arbitrary choice of orientations of $M,\sigma,\tau$,
and on the order of $\sigma,\tau$, but the condition $h\sigma\cdot h\tau=0$ does not.)
For motivation and discussion of $\Z$-embeddings (and of $\Z_2$-embeddings defined below) see \S\ref{ss:alem}.

Corollary \ref{c:su-mohi} follows by Theorem \ref{t:matrixz} and a simple Lemma \ref{l:rank1} due to E. Kogan \cite[Lemma 2.1.7]{KS21e}, \cite{Ko} (for (a) we also need Theorem \ref{t:vkswu}.a).

\bigskip
\textbf{Main result for $\Z_2$-embeddings.}

For a $2k$-manifold $M$, a general position map $h:K\to M$ is called a {\bf $\Z_2$-embedding} if
$|h\sigma\cap h\tau|$ is even for any pair  $\sigma,\tau$ of non-adjacent $k$-faces.
Clearly, any $\Z$-embedding is a $\Z_2$-embedding.
A $\Z_2$-embedding which is not a $\Z$-embedding is exhibited in Fig. \ref{f:curves}, right.

Denote by $|S|_2\in\Z_2$ the number of elements modulo 2 in a finite set $S$.


Denote by $\t K/\Z_2$ the set of all non-ordered pairs of non-adjacent $k$-faces of $K$.
Functions $\t K/\Z_2\to\Z_2$ can be regarded as `symmetric partial matrices'.
A function $\varphi:\t K/\Z_2\to\Z_2$ is called a \textbf{modulo 2 intersection function for $K$} if there is a general position map $f:K \to\R^{2k}$ such that $$\varphi(\sigma,\tau)=|f\sigma\cap f\tau|_2\quad\text{for every}\quad\{\sigma,\tau\}\in\t K/\Z_2.$$
There are functions $\nu_0,\delta_1,\ldots,\delta_s:\t K/\Z_2\to\Z_2$ effectively calculated from $K$ and
such that $\varphi:\t K/\Z_2\to\Z_2$ is an intersection function for $K$ if and only if there are $s\in\Z$ and $n_1,\ldots,n_s\in\Z_2$ such that $\varphi=\nu_0+n_1\delta_1+\ldots+n_s\delta_s$.
(Analogously to Proposition \ref{p:intfun}, the equivalence follows from the case $M=\R^{2k}$ of Lemma \ref{star}.)
Thus it is algorithmically decidable if given function $\varphi:\t K/\Z_2\to\Z_2$ is a modulo 2 intersection function for given $K$.

For $G=\Z$ or $\Z_2$ a bilinear form $q:V\times V\to G$ on a $\Z_2$-vector space or on a $\Z$-module $V$  is called \textbf{even} if $q(v,v)$ is even for every $v\in V$, and is called \textbf{odd} otherwise.
A symmetric matrix with $\Z_2$- or $\Z$-entries is called \textbf{even} (for the case of $\Z_2$ a.k.a. \emph{alternate}) if its diagonal contains only even entries, and is called \textbf{odd} otherwise.
The \textbf{type} of a  bilinear form or of a symmetric matrix is its being even or odd.

We shorten `matrix with $\Z_2$-entries' to just `matrix'.

\begin{theorem}[proved in \S\ref{ss:proof}, \S\ref{ss:real}]\label{t:matrix}
Let $M$ be a $(k-1)$-connected $2k$-manifold.
There is a $\Z_2$-embedding $K\to M$ if and only if
some modulo 2 intersection function for $K$ extends to a symmetric $n\times n$ matrix
of the same type as $\cap_M$, and whose rank does not exceed $\rk M$.
\end{theorem}

See more equivalent conditions in
\cite[Proposition 2.5.1]{KS21e}, \cite{Ko}.
The connectedness assumption in Theorem \ref{t:matrix} is essential for $k=1$\jonly{ \cite[footnote 2]{Sk24}}.\arxiv{\footnote{Let us prove that. Since
$K_{10}$ contains $K_5\sqcup K_5$, by Proposition \ref{p:addi}.b we have $r_{\Z_2}K_{10}\ge2$.
Then $K_{10}$ has no $\Z_2$-embedding to the Moebius band.
So $K_{10}$ has no $\Z_2$-embedding to the disjoint union $M$ of 1000 Moebius bands.
However, $K_{10}$ has $10\cdot9/2=45$ edges, so it embeds into the disk with $45\cdot 44/2<1000$ Moebius bands.
Since $K_{10}$ has 45 edges, any its intersection function extends to
some odd matrix of rank at most $45<1000=\rk M$.}}
It would be interesting to know if the connectedness assumption in Theorems \ref{t:matrix} and \ref{t:matrixz} is essential for $k>1$.

\bigskip
\jonly{\newpage}
\textbf{Criteria in terms of low rank Gramian matrices.}

For general $k,M$ an integer analogue of Theorem \ref{t:matrix} is more complicated than Corollaries \ref{c:oddk} and \ref{c:su-mohi}.




The Gramian matrices of elements from $H_k(M;\Z)$ or from $H_k(M)$ are considered with respect to $\cap_{M;\Z}$ or  to $\cap_M$, respectively.

\begin{theorem}[proved in \S\ref{ss:proof}, \S\ref{ss:real}; see Remark \ref{r:mot}.c]\label{t:matrixz}
Let $M$ be a $(k-1)$-connected orientable $2k$-manifold.
There is a $\Z$-embedding $K\to M$ if and only if
some intersection function for $K$ extends to a Gramian matrix of some $n$ homology classes in $H_k(M;\Z)$ (indexed by $k$-faces of $K$).
\end{theorem}

The condition of Theorem \ref{t:matrixz} is equivalent to solvability of a quadratic system of Diophantine equations.
(Indeed, the variables are numbers $n_1,\ldots,n_s$ from Proposition \ref{p:intfun}, and integer coefficients in expressions of the homology classes through a base of $H_k(M;\Z)$,
cf. a discussion in \cite[\S1.3, the 2nd paragraph after Question 9]{PT19}.)
It is unclear if this solvability is algorithmically decidable.

\begin{corollary}\label{c:oddind} Assume that $k$ is even, $M$ is a closed orientable $(k-1)$-connected $2k$-manifold, and $\cap_{M;\Z}$ is odd indefinite, with positive and negative ranks $r_+$ and $r_-$.
There is a $\Z$-embedding $K\to M$ if and only if
some intersection function for $K$ extends to a matrix $A_{\sigma,\tau}:=x_\sigma^Tx_\tau-y_\sigma^Ty_\tau$ for some $n$ vectors $x_\sigma\in\Z^{r_+}$ and $n$ vectors $y_\sigma\in\Z^{r_-}$.
\end{corollary}

This follows by Theorem \ref{t:matrixz} and classification of symmetric unimodular odd indefinite bilinear forms  over $\Z$ \cite{MH73} (see also \cite[Theorem 6.3.a]{IF}).

We conjecture that the condition of Corollary \ref{c:oddind} is equivalent to extendability of some intersection function for $K$ to an odd indefinite symmetric integer matrix whose positive and negative ranks do not exceed $r_+$ and $r_-$, respectively.


\bigskip
\textbf{Additivity.}

The \emph{rank} $P(K)$ is the minimal number $\rk_\Z M$, where $M$ is a $2k$-manifold admitting an embedding $K\to M$.
The \emph{$\Z_2$-rank} $R_{\Z_2}(K)$ is the minimal number $\rk M$, where $M$ is a $2k$-manifold admitting a $\Z_2$-embedding $K\to M$.
The \emph{$\Z$-rank} $R_{\Z}(K)$ is the minimal number $\rk_\Z M$, where $M$ is an orientable $2k$-manifold admitting a $\Z$-embedding $K\to M$.

\begin{proposition}[additivity]\label{p:addi} For any $k$-complexes $X,Y$ we have

(a) $P(X\sqcup Y)=P(X)+P(Y)$;

(b) $R_{\Z_2}(X\sqcup Y)=R_{\Z_2}(X)+R_{\Z_2}(Y)$;

(c) $R_\Z(X\sqcup Y)=R_\Z(X)+R_\Z(Y)$.
\end{proposition}

This is proved in \S\ref{ss:coro} independently of any embeddability criteria; the proof appeared in a discussion with T. Garaev.


Define $\rho(K),r_{\Z_2}(K),r_\Z(K)$ analogously to $P(K),R_{\Z_2}(K),R_\Z(K)$ but adding `$(k-1)$-connected' before `$2k$-manifold'.

A \emph{maximal $k$-forest} $T\subset K$ is a subcomplex $T\subset K$ inclusion-maximal among subcomplexes containing no non-empty $k$-cycles modulo 2. 
A \emph{maximal integral $k$-forest} $T\subset K$ is a subcomplex $T\subset K$ among subcomplexes containing no non-zero integer $k$-cycles. 
The complex $K$ is said to be \emph{forestable} \emph{(integrally forestable)} if the group $H_{k-1}(T;\Z)$ is free for some maximal $k$-forest $T$ (integral maximal $k$-forest $T$, respectively).  
E.g. any graph is forestable and integrally forestable.  
\jonly{See mor examples in \cite[Comment before Corollary 1.9]{Sk24}.}

\arxiv{
\medskip
{\bf Comment.} (a) There exists a 2-complex $K$ and its maximal integral 2-forest $T$ such that $H_1(X;\Z)$ is free but $H_1(T;\Z)$ is not free.
Indeed, take 
$$K := \R P^2\cup_{\R P^1}D^2\sim \R P^2/\R P^1\cong S^2\quad\text{and}\quad T:=\R P^2$$ 
(more precisely, $T:=\R P^2\cup_{\R P^1}A\sim \R P^2$, where the annulus $A$ is perforated $D^2$).
Then $T$ is a maximal integral 2-forest for $K$, the group $H_1(K;\Z)\cong\Z$ is free but  $H_1(T;\Z)\cong\Z_2$ is not free.

(b) There exists a 2-complex $K$ and its maximal 2-forest $T$ such that $H_1(X;\Z)$ is free but  $H_1(T;\Z)$ is not free.
Indeed, for every $x\in S^1$ identify points $x,xe^{2\pi i/3},xe^{4\pi i/3}\in S^1$.
Denote by $q$ the quotient map.
Let $h:\partial D^2\to q\partial D^2$ be a homeomorphism,
$$K := qD^2\cup_h D^2 \sim qD^2/q\partial D^2 \cong S^2\quad\text{and}\quad T:=qD^2.$$
Then $T$ is a maximal 2-forest for $K$, the group $H_1(K;\Z)\cong\Z$ is free but  $H_1(T;\Z)\cong\Z_3$ is not free.
}

\begin{corollary}[additivity]\label{c:rkcom} For any $k$-complexes $X,Y$ we have

(a) $\rho(X\sqcup Y)=\rho(X)+\rho(Y)$ if $X,Y$ are integrally forestable and $k\ge3$;

(b) $r_{\Z_2}(X\sqcup Y)=r_{\Z_2}(X)+r_{\Z_2}(Y)$ if $X,Y$ are forestable; \qquad

(c)  $r_\Z(X\sqcup Y)=r_\Z(X)+r_\Z(Y)$ if $X,Y$ are integrally forestable.
\end{corollary}

By Theorem \ref{t:vkswu}.a $\rho(K)=r_\Z(K)$ for $k\ge3$.
So (a) follows by (c).
Parts (b,c) are proved in \S\ref{ss:coro} using Theorems \ref{t:matrix} and \ref{t:matrixz}.
It would be interesting to know if Corollary \ref{c:rkcom} holds without the forestability assumptions. 



\bigskip
\jonly{\newpage}
\textbf{Relation to known results.}

\begin{remark}\label{r:mot}
\textbf{(a)} The cases $k=1$ of Theorems \ref{t:matrix} and \ref{t:matrixz}, of Remarks \ref{r:algalem} and \ref{r:hiconn} (and of the $\Z$-analogue of Corollary \ref{c:oddk}) are known \cite[Proposition 9, Corollary 10]{FK19}, \cite[Theorems 1.1, 1.4]{Bi21}.\arxiv{\footnote{The last phrase before \S1.1 of \cite{PT19} reads \emph{...Our algebraic description in this case provides a characterization of graphs admitting an independently even drawing into a given surface.}
This is wrong since for $k=1$ the paper \cite{PT19} presents only \emph{necessary} condition
\cite[Theorem 4]{PT19}, not a characterization (because the sufficiency results
\cite[Theorem 6 and Proposition 21]{PT19} have the assumption $k\ge3$).
It is not written in \cite{PT19} that \cite[Proposition 9 and Corollary 10]{FK19} does give such a characterization, i.e. gives the analogues of \cite[Theorems 4 and 6]{PT19} for $k=1$.
\newline
In \S1.1, in the second paragraph after Theorem 1 of \cite{PT19} one reads \emph{`Theorem 1 ... seems to say something new even for $k=1$...'}.
This is wrong because the linear estimation of \cite[Theorem 1]{PT19} is weaker than the quadratic estimation of \cite[Theorem 1]{FK19} for $m=n$ (recall that $K_{2n}\supset K_{n,n}$ and any almost embedding is a $\Z_2$-embedding).}}

The case $k=1$ of Corollary \ref{c:rkcom} (equivalent to Proposition \ref{p:addi} for $k=1$ as explained in its proof) should be considered known.
This case is classical for Corollary \ref{c:rkcom}.a, was stated as \cite[Lemmas 6, 7]{SS13} for Corollary \ref{c:rkcom}.b, and is analogous for Corollary \ref{c:rkcom}.c.
The case $k=1$ of Lemma \ref{l:realtree}.a and \ref{l:realtree}.b holds by \cite{SS13, FK19} and \cite{Bi21}, respectively.

The paper \cite{SS13} presents interesting heuristic ideas, but \arxiv{Remark \ref{r:crit}.ab}\jonly{\cite[Remark 2.5.1ab]{Sk24}} justifies that the proof there is not reliable (in the sense of \cite{Sk21d}; see \arxiv{\S\ref{s:unre}}\jonly{\cite[\S2.5]{Sk24}} for a discussion).
The paper \cite{Bi21} is unpublished.
The proofs in \cite{SS13, Bi21} can be easily made reliable (for \cite{Bi21} perhaps just checked to be reliable) by anyone with a moderate level of mathematical maturity, given enough time and motivation.
For mature readers who do not have enough time and motivation, we prove in \S\S \ref{ss:proof}, \ref{ss:real}, \ref{ss:coro} 
the cases $k=1$ of Theorems \ref{t:matrix}, \ref{t:matrixz}, and of Corollary \ref{c:rkcom}.bc (without claiming priority, see the first two paragraphs of (a)).


\textbf{(b)}  Criteria for the embeddability of $k$-complexes in given $m$-manifold for $2m\ge3k+3$ are given in \cite[Theorem 1 and Corollaries 6, 7, 8]{Ha69}, in terms of isovariant maps or cohomology obstructions (in certain  configuration space).
Assume further that $m=2k\ge6$.
Such criteria in a different equivalent form involving \emph{`intersection cocycle'}
are given in \cite{Jo02}; see exposition in \S\ref{ss:real}, in
\cite[Proposition 2.5.1.EH and Remark 2.5.6.b]{KS21e};
cf. \cite{La70}.

A formula for the intersection cocycle (and for its cohomology class) is given

\quad(i) implicitly in homological terms
in \cite[Theorem 3.2]{Kr00}, see explanation in
\cite[Remark 2.5.2.b]{KS21e};

\quad(ii) for $k=1$ in terms of crossing numbers in \cite[\S3.1, equality (1)]{FK19}\jonly{, see \cite[footnote 4]{Sk24}};\arxiv{\footnote{The vectors $y_\sigma$ from \cite{FK19}
and this paper is essentially the same as the homomorphism $\psi$ from \cite{PT19}.
\newline
The information (i,ii) is missing from \cite[Remark 5.d]{PT19}; the papers \cite{Kr00, FK19} are not cited in \cite{PT19} (the paper [FK19] cited in \cite{PT19} is different from \cite{FK19} cited in this paper).
The papers \cite{Kr00, FK19} are cited with explanation of their relevance to \cite{PT19} in (arXiv version 1 of)
\cite[Remarks 1.1.1.b, 1.3.7.b and 2.5.2.b]{KS21e}.
The paper \cite{KS21e} was sent to the authors of \cite{PT19} before appearance of arXiv version 1 of \cite{KS21e}, so well before appearance of \cite{PT19}.}}

\quad(iii) in terms of cohomology class $\omega(\psi)$ (in certain configuration space) in \cite[Theorem 4]{PT19}, see exposition in
\cite[\S2.5, Definitions required for (RH), Lemma 2.5.5]{KS21e}.

\textbf{(c)} Such a formula reduces embeddability to solvability of a system of quadratic Diophantine equations, i.e. to the existence of a Gramian matrix as in Theorem \ref{t:matrixz}.
Such an embeddability criterion is the conjunction of \cite[Theorems 4, 6 and 15]{PT19}, and is explicitly stated in Theorem \ref{t:matrixz} (see also Theorem \ref{t:vkswu}.a), without using cohomology classes in configuration spaces as in \cite{PT19}.\arxiv{\footnote{That criterion is used in \cite[Remark 4.3]{DS22}; Theorems 1.2-1.5 of \cite{SS23} use an elaboration \cite[Proposition 2.5.1]{KS21e} (due to E. Kogan \cite{Ko}) of that criterion.}}
\jonly{See \cite[footnote 5]{Sk24}.}
Theorem \ref{t:matrixz} (and Lemma \ref{l:realtree}) are essentially results of Pat\'ak-Tancer (with a contribution by Harris-Krushkal-Johnson), although we present an explicit statement and (in \S\ref{ss:proof}, \S\ref{ss:real}) a simpler direct proof independent of \cite{PT19}; see also (e).


\textbf{(d)} The main novelty of this paper is passing from the condition of Pat\'ak-Tancer to the \emph{extendability of some intersection function to a low-rank matrix}.
The latter condition is simpler because it does not involve homology classes in the manifold, only the rank of the intersection form of the manifold.
(Although the definition of the intersection form requires homology, results on rank are stated without homology, see e.g. footnote \ref{f:rank}.)

The condition of such extendability appeared in \cite{FK19, Bi21} for graphs.
Thus our results are higher-dimensional generalizations
of criteria for the $\Z_2$- and $\Z$-em\-bed\-da\-bi\-li\-ty of graphs to surfaces obtained in \jonly{the unpublished} Bikeev's paper \cite{Bi21} using ideas of Fulek-Kyn\v cl-Schaefer-Stefankovi\v c \cite{SS13, FK19}.
Just as the criteria of \cite{PT19} (in terms of cohomological obstruction), these generalizations are not very hard (modulo classical techniques of geometric topology, see details in (e,g)).
Although we provide direct proofs of these generalizations,
they could be deduced from
Theorem \ref{t:matrixz}, \cite[Proposition 21 and Theorem 15]{PT19} and relations between the extendability to a matrix of given rank and type, and to a Gramian matrix (algebraic Lemmas \ref{l:main} and \ref{l:Z_Hg}).
Still, we hope these generalizations are interesting since they
reveal relation to the `low rank matrix completion problem', and give interesting corollaries (see the beginning of \S\ref{ss:introd}).

\textbf{(e)} The case $k=2$ is excluded from \cite[Theorems 6, 10.i, and Proposition 21]{PT19}.
However, \cite[Theorem 10.i, and Proposition 21]{PT19}, and the analogue of \cite[Theorem 6]{PT19} obtained by replacing `embedding' by `$\Z$-embedding', do hold for $k=2$.
(By (a), these results are known for $k=1$.)
The proofs could be obtained by minor changes in
\cite{PT19}; simpler proofs (of the versions in terms of low-rank matrix completion problem) are presented in this paper.


\textbf{(f)} For the K\"uhnel problem on embeddings of $k$-skeleta of $n$-simplices into $2k$-manifolds see recent paper \cite{DS22}
and the references therein.

\textbf{(g)} (known ideas in our proofs) Our constructions of a $\Z_2$- and $\Z$-embedding from a matrix (in Theorems \ref{t:matrix} and \ref{t:matrixz}; \S\ref{ss:proof}) use known construction of a map inducing given homomorphism in homology, cf.
\cite[Lemma 2.5.3]{KS21e}\jonly{ and \cite[footnote 5]{Sk24}.}\arxiv{.\footnote{Because of using this construction, also used in \cite[\S5, proof of Theorem 6, step 1]{PT19},
a part of our proof is similar to \cite{PT19}.}}
Our construction of a matrix from a $\Z_2$- and $\Z$-embedding (in Theorems \ref{t:matrix} and \ref{t:matrixz}; \S\ref{ss:real}) use
the modification of a given map as in \cite[Lemma 12]{PT19}, cf.
\cite[Lemma 2.5.5]{KS21e}.

\arxiv{Geometric proofs (closer to \cite{Bi21}) of Theorems \ref{t:matrix} and \ref{t:matrixz}
could perhaps be obtained using a handle decomposition of $M$.}
\end{remark}

\section{Almost, $\Z_2$- and $\Z$-embeddings: some motivation}\label{ss:alem}


A map $f:K\to Y$ of a complex $K$ to a space $Y$ is called an {\it almost embedding} if the images of non-adjacent faces are disjoint, 
i.e. if $f\sigma\cap f\tau=\emptyset$ for any non-adjacent faces  $\sigma,\tau$.

This notion naturally appears in

$\bullet$ geometric topology (studies of embeddings, see Remark \ref{r:alem}.a),

$\bullet$ combinatorial geometry (Helly-type results on convex sets, see \cite{Mat97, GPP+}), and

$\bullet$ topological combinatorics (topological Radon and Tverberg theorems, see survey \cite{Sk16}).

For recent papers on almost embeddings 
see 
\cite{ST17, PT19, KS20, Al22, AM25, AMS}.

\emph{Almost isotopy} (i.e. continuous deformation through almost embeddings) is studied under the names \emph{weak homotopy} and \emph{vertex homotopy} 
for graphs in $\R^3$ since \cite{Ta94}; for a later paper involving later references see \cite{FN09}.

The related notions of $\Z_2$- and $\Z$-embedding (defined in \S\ref{ss:introd}) 
also naturally appear in studies of embeddings.
The notion of a $\Z_2$-embedding (a.k.a. \emph{Hanani-Tutte} drawing) is most actively studied for graph drawings on surfaces, see survey  \cite{Sc13} and \cite{SS13, FK19, Bi21}.
A more general notion (\emph{AT graph}) has been studied  since \cite{KLN}; for a recent paper involving recent references see \cite{Ky20}.
The higher-dimensional analogue of this general notion is so natural and useful that it was used in \cite[Disjunction Theorem 3.1]{Sk02} without naming it.

 
 
\begin{remark}\label{r:alem} 
(a) \emph{Why almost embeddings are useful for embeddings?}
Some constructions of embeddings have constructions of almost or $\Z$-embeddings as a convenient intermediate step allowing to structure the proof, 
and to describe the relation to known results and methods.
E.g. almost embeddings essentially appeared in the 1932-1967 in the proof of embeddability criteria, 
see  Theorem \ref{t:vkswu}.a and survey \cite[\S4, \S8]{Sk06}.   
The notion was used and explicitly defined in \cite[\S4]{FKT}.  

Some proofs of the non-embeddability of complexes into $\R^d$ actually show that these complexes are not almost embeddable to $\R^d$, are not $\Z_2$- or $\Z$-embeddable to $\R^d$ for $d=2k$.
This is so e.g. 

$\bullet$ for the boundary of $(d+1)$-simplex (the non-almost embeddability is the topological Radon Theorem; see surveys \cite[\S1.1, $(TR_d)$]{Sk16}, \cite[\S2.2 and Theorem 3.1.5]{Sk18}), and 
 
$\bullet$ for the $k$-complex $\Delta_{2k+2}^k$ and $d=2k$ (the non-$\Z_2$-embeddability is a version of van Kampen--Flores Theorem; see surveys \cite[\S1.1, $(VKF_{2k})$]{Sk16}, \cite[\S1.4 and Theorem 3.1.6]{Sk18}).


(b) Clearly, the property of being an almost embedding is preserved under sufficiently small perturbation of the map 
(the same holds for the properties of being a $\Z_2$-, and a $\Z$-embedding, as opposed to the property of being an embedding).
Thus by approximation of continuous maps with PL maps we observe that for complexes in manifolds


$\bullet$ a topological embedding can be approximated by a PL almost embedding;

$\bullet$ a PL or topological almost embedding can be approximated by a general position PL almost embedding;

$\bullet$ PL almost embeddability is equivalent to topological almost embeddability.

(c) Studies of \emph{embeddings} of $k$-complexes into $2k$-manifolds for $k>1$ 
are analogous to studies of \emph{$\Z_2$-embeddings} (not embeddings) of graphs to surfaces.
Indeed, Euler inequality $V-E+F\ge\chi(M)$ is not proved, and is possibly incorrect, for a $\Z_2$-embedding of a graph to a surface $M$.
Analogously, for $k>1$ a $k$-hyperplane in $\R^{2k}$ does not split $\R^{2k}$, so a direct analogue of Euler inequality is not available for $k$-complexes in $2k$-manifolds.
For $\Z_2$-embeddings of graphs to a surface instead of Euler inequality one applies the intersection form of the surface.
Criteria for embeddability of $k$-complexes into $2k$-manifolds for $k>1$ are also obtained in terms of the intersection form, see \S\ref{ss:introd}.
For an idea in some sense replacing the Euler inequality, and implementation of this idea, see \cite{Ka91}, \cite{Ad18}, \cite[\S6]{DS22}.
\end{remark}

 


 
\begin{theorem}\label{t:vkswu} (a) A complex of dimension $k\ne2$ is embeddable into a simply connected $2k$-manifold $M$ if and only if the complex is $\Z$-embeddable to $M$.
(See  \cite{vK32, Sh57, Wu58} and also \cite{Ha69, Jo02}.)

(b) For every $k\ge2$ there is a 2-complex $\Z_2$-embeddable but not $\Z$-embeddable to $\R^{2k}$ \cite[Example 3.6]{Me06}.

(c) There is a 2-complex $\Z$-embeddable but not almost embeddable to $\R^4$.
(See \cite[\S3.2, \S3.3, \S4]{FKT}, \cite[Theorem 1.6]{AMSW}, \cite{Al22}.)

(d) There is a 2-complex almost embeddable but not PL embeddable to $\R^4$ \cite[Example in p. 338]{SSS}.
\end{theorem}

\begin{proof}[Comments on the proof of (a)]
Clearly, an embedding is a $\Z$-embedding.
Let us discuss the converse implication.

For $k=1$ the converse follows by the Hanani-Tutte Theorem, see survey \cite[Theorem 1.5.3]{Sk18} (because a  simply connected 2-manifold is the 2-sphere or the 2-disk).

Assume that $k\ge3$.
For $M=\R^{2k}$ the converse is proved in \cite{vK32, Sh57, Wu58}; for a simple exposition see \cite[\S2]{FKT}, \cite[\S4]{Sk06}.
The general case is proved in the same way, just note that in
\cite[Lemma 4, 5 and application of the Whitney trick in the proof of Theorem 3]{FKT} $\R^{2k}$ could be replaced by $M$.
See details in \cite[Corollary 2 and Theorem 4]{Jo02}.
The proofs of \cite{Jo02} mentioned here and in
Remark \ref{r:hajolem} are written for `smooth' maps of complexes but work for PL maps.\jonly{ See \cite[footnote 7]{Sk24}.}\arxiv{\footnote{Part (a) is stated as \cite[Proposition 7 for $M=M'$]{PT19} (in a slightly weaker form), and
implicitly proved in (known) step 3 of \cite[\S5, proof of Theorem 6]{PT19}.
(A $\Z$-embedding in the notation of \cite{PT19} is a map $f'':K\to M$ such that $\vartheta_{f''}=0$.
So that step 3 is precisely the proof of Theorem \ref{t:vkswu}.a.)
\newline
Part (a) also follows by \cite[Corollary 4 and its proof]{Ha69}, and was essentially proved in a more general situation in \cite[\S5]{Ha69}, cf.~\cite[Corollary 6 and the third paragraph after Corollary 5]{Ha69}.}}
\end{proof}
 

For more information on almost embeddings see \cite[Remarks 1.3 and 1.4, \S11]{AMS}.

\begin{remark}\label{r:crit2} Recall that

(a) there are two isomorphism classes of non-degenerate symmetric bilinear forms over $\Z_2$ of a given rank: 
odd forms and (for an even rank) even forms \cite{MH73} (see also \cite[Theorem 6.1]{IF});

(b) any smooth manifold has a unique PL structure \cite{Mu74},
so the intersection form of a smooth $2k$-manifold can be defined as the intersection form of the corresponding PL manifold;

(c) $(k-1)$-connected smooth $2k$-manifolds with odd intersection forms exist only for $k=1,2,4$ 
(see a folklore proof in \cite[Remark 1.2.d]{KS21}; the PL analogue presumably holds).
\end{remark}

\begin{remark}\label{r:algalem} (a) \arxiv{Algorithms for recognizing the $\Z_2$- and $\Z$-embeddability of $k$-complexes to given $2k$-manifold are interesting.
For a related problem see \cite[Remark 2.5.7]{KS21e}.
}
\emph{If $M$ is a $(k-1)$-connected $2k$-manifold, then there is an algorithm recognizing $\Z_2$-embeddability of $k$-complexes to $M$} \cite[Theorem 10.i]{PT19}, see Remark \ref{r:mot}.e.
Simpler, this follows from Theorem \ref{t:matrix} and the case $M=\R^{2k}$ of Lemma \ref{star}.



\arxiv{Is there such a polynomial (in $n$) algorithm for $k>1$?
Cf. \cite{Ko21}.
By (b) and Remark \ref{r:crit2}.a it suffices to answer this question for $M$ being a connected sum of several copies of $S^k\times S^k$ (for $\cap_M$ even) or of $\C P^2$, of $\Hc P^2$ (for $\cap_M$ odd).
For $k=1$ see \cite[Remark 1.2.b]{Bi21}.}

(b) \emph{For a $(k-1)$-connected $2k$-manifold $M$, the $\Z_2$-embeddability of given $k$-complex to $M$ depends only on the rank and the type of $\cap_M$.}
This follows from the criterion \cite[Proposition 21]{PT19} for $\Z_2$-embeddability in terms of a cohomology obstruction, see Remark \ref{r:mot}.e.
Simpler, this follows from Theorem \ref{t:matrix} and Remark \ref{r:crit2}.a.


\emph{For a $(k-1)$-connected orientable $2k$-manifold $M$, the $\Z$-embeddability of given $k$-complex to $M$ depends only on $\cap_{M;\Z}$}.
This follows by Theorem \ref{t:matrixz}, and for $k\ge3$ is essentially the same as \cite[Proposition 7]{PT19}, cf. \arxiv{(c).}\jonly{\cite[Remark 1.2.4.c]{Sk24}.}

\arxiv{(c) It would be interesting to know if {\it for $k\ge3$ any two $(k-1)$-connected PL $2k$-manifolds are PL homeomorphic if they have isomorphic intersection forms and boundaries PL homeomorphic to $S^{2k-1}$}
(cf. classification of  `almost closed' manifolds \cite[p. 170]{Wa62}; pre-H-spaces and  H-spaces are defined on p. 168 and 169).
This statement implies the $\Z$ case of (b) for $k\ge3$ and $M$ closed (so it implies \cite[Proposition 7]{PT19}).}
\end{remark}

\section{Linear algebraic lemmas}\label{ss:alemmas}


In this section $V$ is any $\Z_2$-vector space, and $I\colon V\times V\to\Z_2$ is any symmetric bilinear form.
The Gramian matrices are considered with respect to $I$.

\begin{lemma}\label{l:main} Let $A$ be a Gramian matrix of some vectors from $V$.
Then there is a same-size symmetric matrix coinciding with $A$ outside the diagonal, having the same type as $I$, and whose rank does not exceed $\rk I$.

Let $A$ be a symmetric matrix of the same type as $I$, and whose rank does not exceed $\rk I$.
Then $A$ is a Gramian matrix of some vectors from $V$.

\end{lemma}

Lemma \ref{l:main} for $\rk A=\dim V$ implicitly appeared in \cite[\S2]{Bi21}.
Lemma \ref{l:main} is deduced below from well-known Lemmas \ref{l:gram}, \ref{l:midrk}, known Lemma \ref{l:matrix_HgI}, and simple Lemma \ref{l:alternate} (which appeared in a discussion with A. Bikeev, and presumably is known).

\begin{lemma}\label{l:gram} Let $F$ be $\Z_2$ or $\Q$.
    Let $J\colon W\times W\to F$ be a symmetric bilinear form on an $F$-vector space $W$.
    If $A$ is a Gramian matrix with respect to $J$, then $\rk A \le \rk J$.
\end{lemma}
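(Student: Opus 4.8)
The plan is to exhibit the Gramian matrix $A$ as a product of three matrices whose middle factor is a coordinate matrix of $I$, and then to invoke submultiplicativity of matrix rank. By definition there are finitely many vectors $y_\sigma\in V$ (one per $k$-face, so $n$ of them) with $A_{\sigma,\tau}=I(y_\sigma,y_\tau)$, so the entire argument takes place inside the finitely generated subspace spanned by the $y_\sigma$.

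First I would reduce to the finite-dimensional case, since $V$ itself may be infinite-dimensional. Let $W\subseteq V$ be the span of the vectors $y_\sigma$ and let $m:=\dim_F W<\infty$. The matrix $A$ depends only on the restriction $I|_W$. Moreover $\rk(I|_W)\le\rk I$: the map $W\to W^*$ induced by $I|_W$ is obtained from the map $V\to V^*$ induced by $I$ by precomposing with the inclusion $W\hookrightarrow V$ and postcomposing with the restriction $V^*\to W^*$, hence its rank is at most that of the middle map, namely $\rk I$. Thus it suffices to prove $\rk A\le\rk(I|_W)$.

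Next I would choose a basis $e_1,\dots,e_m$ of $W$ and set $G:=(I(e_i,e_j))_{i,j}$, the $m\times m$ coordinate matrix of $I|_W$, so that $\rk G=\rk(I|_W)$. Writing $y_\sigma=\sum_i Y_{i\sigma}e_i$ and collecting the coordinate columns into the $m\times n$ matrix $Y$, bilinearity gives $A_{\sigma,\tau}=\sum_{i,j}Y_{i\sigma}G_{ij}Y_{j\tau}$, that is, the factorization $A=Y^TGY$. The elementary inequality $\rk(BC)\le\min\{\rk B,\rk C\}$, valid over any field and in particular over $\Z_2$ and $\Q$, then yields $\rk A\le\rk G=\rk(I|_W)\le\rk I$, as required.

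There is no serious obstacle here. The only point needing care is that $V$ may be infinite-dimensional, which is dealt with by passing to the finite-dimensional span $W$ and observing that restriction does not raise the rank of a symmetric form; everything else is the standard factorization of a Gram matrix together with submultiplicativity of rank.
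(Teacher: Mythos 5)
Your proof is correct. The paper itself gives no proof of this lemma---it is cited as well-known and used as a black box in the proofs of Lemma \ref{l:main} and Theorem \ref{c:rkcom}---but your argument (factor the Gram matrix as $A=Y^TGY$ with $G$ a coordinate matrix of the form, then apply $\rk(BC)\le\min\{\rk B,\rk C\}$) is exactly the standard one the paper implicitly relies on; indeed the same two ingredients appear explicitly elsewhere in \S\ref{ss:alemmas}, e.g.\ the factorizations in Lemma \ref{l:matrix_HgI} and the submultiplicativity statement Lemma \ref{l:midrk}. Your extra care with a possibly infinite-dimensional $V$, handled by restricting to the span $W$ of the $y_\sigma$ and noting that $W\to W^*$ factors through $V\to V^*$, is sound and slightly more general than the paper needs, since in all applications here $V$ is a finite-dimensional space such as $H_k(M)$ for compact $M$.
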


\begin{lemma}[{\cite[Lemma 2.2]{Bi21}}]\label{l:alternate}
If $I$ odd and $A$ is an even Gramian matrix, then $\rk A \le \rk I - 1$.
\end{lemma}

\begin{proof}[Proof of the first part  of Lemma \ref{l:main}]
By Lemma \ref{l:gram} we have $\rk A \le \rk I$.

If $I$ and $A$ have the same type, then $A$ is as required.

If $I$ and $A$ have different types, then
$I$ is odd but $A$ is even.
Hence by Lemma \ref{l:alternate} $\rk A \le \rk I-1$.
Take the matrix $A'$ obtained from $A$ by replacing the entry $A_{1,1}$ with $1$.
Then $A'$ is odd, and $\rk A' \le \rk A+1 \le \rk I$ (here the first equality holds because changing one entry cannot change the rank more than by 1).
So $A'$ is as required.
\end{proof}

\begin{lemma}\label{l:midrk}
Let $A$ and $B$ be $n\times m$ and $m\times k$ matrices, respectively.
    Then $\rk AB \le \rk B$.
    In particular, $\rk AB \le m$.
\end{lemma}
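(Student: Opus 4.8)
The plan is to identify $\rk$ with the dimension of the row space and to show that the row space of the product $AB$ is contained in that of $B$. I would start by recalling that over the field $\Z_2$ the rank of a matrix equals the dimension of the subspace spanned by its rows (equivalently, by its columns).

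First I would express the rows of $AB$ in terms of the rows of $B$. Denote by $b_1,\dots,b_m\in\Z_2^k$ the rows of $B$. For each index $i$, the $j$-th entry of the $i$-th row of $AB$ is $\sum_{\ell=1}^m A_{i\ell}B_{\ell j}$, so the $i$-th row of $AB$ equals $\sum_{\ell=1}^m A_{i\ell}\,b_\ell$. Hence every row of $AB$ is a $\Z_2$-linear combination of $b_1,\dots,b_m$.

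Consequently the row space of $AB$ is a linear subspace of the row space of $B$, and taking dimensions gives $\rk AB\le\rk B$. For the final assertion I would note that the row space of $B$ is spanned by its $m$ rows, so $\rk B\le m$; combining the two inequalities yields $\rk AB\le m$. There is no genuine obstacle here, the statement being a standard fact of linear algebra over a field; the only point to keep in view is that the bound by $\rk B$ (rather than by $\rk A$) is the one governed by the rows, which is why the row-space containment, and not the column-space containment, is the appropriate tool.
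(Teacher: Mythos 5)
Your proof is correct: the identification of the $i$-th row of $AB$ with $\sum_{\ell=1}^m A_{i\ell}b_\ell$ gives the containment of row spaces, hence $\rk AB\le\rk B\le m$, and every step is valid over the field $\Z_2$. The paper itself gives no proof of Lemma~\ref{l:midrk} --- it is cited only as ``well-known'' --- so there is nothing to compare against; your argument is the standard one and fills that gap adequately.
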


Recall that the rank of an even symmetric matrix
is even.
Denote by $H_g$ the $2g\times2g$ matrix
formed by $g$ diagonal blocks
$\begin{pmatrix}0&1\\1&0\end{pmatrix}$ and zeros elsewhere.

\begin{lemma}[{\cite[Theorem 3]{AA38}, \cite[Theorem 1]{MW69}}]\label{l:matrix_HgI}
Let $A$ be a symmetric $a\times a$ matrix.
Let $H$ be the matrix $H_{\frac{\rk A}{2}}$ if $A$ is even, and the $\rk A\times\rk A$ identity matrix otherwise.
    Then there is a $\rk A\times a$ matrix $Y$ such that $A = Y^THY$.
\end{lemma}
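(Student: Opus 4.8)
The plan is to realize $A$ as the symmetric bilinear form $(u,v)\mapsto u^TAv$ on $V:=\Z_2^a$, reduce it to its associated \emph{nondegenerate} form on the quotient by its kernel, and then apply the classification of symmetric bilinear forms over $\Z_2$ to obtain exactly the canonical matrix $H$. Concretely, I would set $W:=\ker A=\{v\in V:\ Av=0\}$; since $A$ is symmetric its left and right kernels coincide, so $W$ is the radical of the form and $\dim W=a-\rk A$. Because every element of $W$ pairs trivially with all of $V$, the form descends to a well-defined symmetric bilinear form $\bar A$ on $U:=V/W$, and $\bar A$ is nondegenerate with $\dim U=\rk A=:r$.

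The step that must be handled with care is the matching of \emph{types}. Over $\Z_2$ one has $A(v,v)=\sum_i v_iA_{ii}$ for $v=(v_i)$, since the off-diagonal terms cancel in pairs by symmetry and $v_i^2=v_i$. Hence $A$ has zero diagonal (is even as a matrix) if and only if $A(v,v)=0$ for all $v$ (is even as a form). As $\bar A(\bar v,\bar v)=A(v,v)$, the quotient form $\bar A$ is even exactly when $A$ is, and in the even case the rank $r$ is even by the recalled fact, so the symplectic matrix $H_{r/2}$ is defined. This is precisely the point ensuring that the classification will produce $H_{r/2}$ rather than the identity, and conversely.

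Next I would invoke the classification of nondegenerate symmetric bilinear forms over $\Z_2$ \cite[Theorem 6.1]{IF}: there is a basis of $U$ in which the Gram matrix of $\bar A$ is exactly $H$, namely $H_{r/2}$ in the even case and the $r\times r$ identity in the odd case. Equivalently, there is a linear isometry $\phi\colon(U,\bar A)\to(\Z_2^r,H)$. Letting $\pi\colon V\to U$ be the projection and $Y$ the $r\times a$-matrix of the composite $\phi\circ\pi\colon\Z_2^a\to\Z_2^r$, I get for all $u,v\in\Z_2^a$
$$u^TAv=A(u,v)=\bar A(\pi u,\pi v)=H(\phi\pi u,\phi\pi v)=(Yu)^TH(Yv)=u^TY^THYv,$$
whence $A=Y^THY$ with $Y$ of the required size $\rk A\times a$.

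The only genuinely delicate point is the second paragraph: reconciling the matrix notion of evenness (zero diagonal) with the intrinsic notion ($A(v,v)\equiv0$) and checking it descends to $U$; everything else is a routine passage to the quotient by the radical. If a self-contained argument is preferred to citing \cite{IF}, the third step can be replaced by symmetric Gaussian elimination (simultaneous row/column operations $A\mapsto P^TAP$) bringing $A$ to $\diag(H,0)$; then $A=(P^{-1})^T\diag(H,0)P^{-1}=Y^THY$, where $Y$ is the first $r$ rows of $P^{-1}$. There the even case splits off hyperbolic planes $\left(\begin{smallmatrix}0&1\\1&0\end{smallmatrix}\right)$, while the odd case repeatedly splits off a vector $v$ with $A(v,v)=1$ via $V=\langle v\rangle\perp v^\perp$.
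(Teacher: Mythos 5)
Your proof is correct, but there is nothing in the paper to compare it against line by line: Lemma \ref{l:matrix_HgI} is stated as known, with the proof delegated to \cite[Theorem 3]{Al} and \cite[Theorem 1]{MW69}, which classify symmetric matrices over fields of characteristic $2$ up to congruence. What you have done is reconstruct that classification in exactly the form needed: quotient by the radical (correctly identified with $\ker A$ for symmetric $A$, so the induced form on $\Z_2^a/\ker A$ is nondegenerate of rank $\rk A$), verify that matrix-evenness, form-evenness, and evenness of the quotient form coincide via $A(v,v)=\sum_i v_iA_{ii}$, and then apply \cite[Theorem 6.1]{IF} --- the same statement the paper recalls in Remark \ref{r:crit2}.a --- to identify the quotient with $(\Z_2^{\rk A},H)$; pulling back along the quotient map composed with the isometry gives $Y$. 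This is a clean, self-contained route resting on a reference the paper already uses elsewhere. One caveat on your optional Gaussian-elimination sketch: in the odd case, ``repeatedly splitting off a vector $v$ with $A(v,v)=1$'' can stall, since the orthogonal complement of such a $v$ may be alternating (already for $\left(\begin{smallmatrix}1&0&0\\0&0&1\\0&1&0\end{smallmatrix}\right)$ the complement of the first basis vector contains no vector of square $1$); over $\Z_2$ one needs the additional congruence $\langle1\rangle\perp\left(\begin{smallmatrix}0&1\\1&0\end{smallmatrix}\right)\cong\langle1\rangle\perp\langle1\rangle\perp\langle1\rangle$ to finish. Since you offer that sketch only as an alternative, this does not affect the validity of your main argument.
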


\begin{proof}[Proof of the second part  of Lemma \ref{l:main}]
(This proof appeared in a discussion with E. Kogan \cite[\S2.1]{KS21e}, \cite{Ko})
Denote by $I$ the matrix of the bilinear form $I$ in some basis of $V$.
For $C = A, I$ let $H_C$ be the matrix $H_{\frac{\rk C}2}$ if $C$ is even and the $\rk C\times\rk C$ identity matrix if $C$ is odd.
By Lemma \ref{l:matrix_HgI} for each $C = A, I$ there is a matrix $Y_C$ such that $C = Y_C^T H_CY_C$.
By Lemma \ref{l:midrk} we have $\rk Y_I \geq \rk H_IY_I \geq \rk I$.
Hence all the $\rk I$ rows of $Y_I$ are linearly independent.
Denote $m := \dim V$.
Then there is a nondegenerate $m\times m$ matrix $Y'_I$ whose first $\rk I$ rows are the rows of $Y_I$.
Denote by $H'_I$ the $m\times m$ matrix $\begin{pmatrix}H_I&0\\0&0\end{pmatrix}$.
Then $I = (Y'_I)^T H'_I Y'_I$.
Denote by $Y$ the $m\times\rk A$ matrix obtained from $Y_A$ by adding $m-\rk A$ zeroes below every column of $Y_A$.
Then $A = Y^TH'_IY = ((Y'_I)^{-1}Y)^T I ((Y'_I)^{-1}Y)$.
Thus $A$ is the Gramian matrix of the vectors represented in the chosen basis of $V$ by the columns of the matrix $(Y'_I)^{-1}Y$.
\end{proof}


Denote by $H_{g,\Z}$ the integer $2g\times2g$ matrix formed by $g$ diagonal blocks
$\begin{pmatrix}0&1\\-1&0\end{pmatrix}$ and zeros elsewhere.

\begin{lemma}[{follows from \cite[\S5, Theorem 1]{Bo07}}]\label{l:Z_Hg}
For every integer skew-symmetric $a\times a$ matrix $A$ its rank $r$ is even and there is an integer $r\times a$ matrix $Y$ such that $A = Y^T H_{r/2,\Z} Y$.
\end{lemma}


\begin{conjecture}\label{c:skew} Suppose that $A$ and $H$ are integer skew-symmetric matrices, $H$ is nondegenerate, $\rk A\le r:=\rk H$, and every $r\times r$ minor of $A$ is divisible by
$\det H$\arxiv{ (the divisibility is automatic when $\rk A < \rk H$)}.
Then $A = Y^THY$ for some integer matrix $Y$.
\end{conjecture}





\begin{lemma}[E. Kogan {\cite[Lemma 2.1.7]{KS21e}}, \cite{Ko}]\label{l:rank1}
If a symmetric integer $n\times n$ matrix $A$ has rank 1, and any diagonal element of $A$ is the square of an integer,
then $A=bb^T$ for some vector $b\in\Z^n$ (standardly considered as an $n\times1$ matrix).
\end{lemma}



\section{Construction of $\Z_2$- and $\Z$-embeddings}\label{ss:proof}

\begin{lemma}[known]\label{l:realcl} If $M$ is a $(k-1)$-connected manifold, then any homology class in $H_k(M)$ or in $H_k(M;\Z)$ is represented by a general position map $S^k\to M$.
\end{lemma}

\begin{proof}[A known proof]
For $k=1$ the lemma is obvious, so assume that $k\ge2$.
Then $M$ is simply-connected.
Hence $H_{k-1}(M;\Z)=0$ by Hurewicz theorem.
So the reduction modulo 2 $H_k(M;\Z)\to H_k(M)$ is epimorphic by the exactness of the coefficient exact sequence.
Hence it suffices to prove the lemma for $\Z$.

Since $k\ge2$ and $M$ is a $(k-1)$-connected, the Hurewicz map $\pi_k(M)\to H_k(M;\Z)$ is epimorphic by Hurewicz theorem.
Hence any homology class $\alpha\in H_k(M;\Z)$ is represented by a continuous map $S^k\to M$.
Then by a small shift we obtain a general position map $S^k\to M$ representing $\alpha$.
\end{proof}

\begin{proof}[Proof of the implication $(\Leftarrow)$ of Theorem \ref{t:matrix}]
By (the second part of)
Lemma \ref{l:main}
some modulo 2 intersection function for $K$ extends to a Gramian matrix of some $n$ homology classes $y_\sigma\in H_k(M)$ indexed by $k$-faces of $K$.

Then there is a general position map $f:K\to\R^{2k}$ such that $y_\sigma\cap_M y_\tau = |f\sigma \cap f\tau|_2$ for every non-adjacent $k$-faces $\sigma, \tau$ of $K$.

Take a $2k$-ball $B\subset\Int M$.
We may assume that $fK\subset B$.
By Lemma \ref{l:realcl} for any $k$-face $\sigma$ the class $y_\sigma$ is represented by a general position  map $\widetilde\sigma:S^k\to M$.
By general position $\widetilde\sigma(S^k)\cap B=\emptyset$.
We may assume that the maps $\widetilde\sigma$ for different $\sigma$'s are in general position to each other.
Since $M$ is connected, we can take a general position map $h:K\to M$ obtained from $f$ by connected summation of $f|_\sigma$ and $\widetilde\sigma$
along certain arc $l_\sigma$,
for every $k$-face $\sigma$.

(*) \emph{For $k\ge2$ by general position we may assume that
$l_\sigma\cap(f\tau\cup\widetilde\tau S^k\cup l_\tau)=\emptyset$ for $\sigma\ne\tau$.
For $k=1$ we may assume that for the tube $U_\sigma$ along which we make connected summation for $\sigma$,
each of the intersections of $U_\sigma$ with $f\tau$, with $\widetilde\tau S^k$, and with any other tube, consists of an even number of points.}

Now the map $h$ is a $\Z_2$-embedding because for any non-adjacent $k$-faces $\sigma, \tau$ we have
$$
|h\sigma \cap h\tau|_2 \stackrel{(1)}=
|f\sigma \cap f\tau|_2+|\widetilde\sigma S^k \cap \widetilde\tau S^k|_2 \stackrel{(2)}=
|f\sigma \cap f\tau|_2+y_\sigma\cap_M y_\tau \stackrel{(3)}= 0.
$$
Here

$\bullet$ equality (1) holds by (*), and because
$\widetilde\sigma S^k\cap f\tau \subset \widetilde\sigma S^k\cap B=\emptyset$ and analogously
$\widetilde\tau S^k\cap f\sigma=\emptyset$;

$\bullet$ equality (2) holds because $\widetilde\sigma S^k,\widetilde\tau S^k$ represent $y_\sigma,y_\tau$, respectively;

$\bullet$ equality (3) holds by the choice of $f$.
\end{proof}

\begin{proof}[Proof of the implication $(\Leftarrow)$ of Theorem \ref{t:matrixz}]
The proof is obtained from the above proof of the implication $(\Leftarrow)$ of Theorem \ref{t:matrix} by the following changes.
Replace the first sentence by
`Denote by $y_\sigma\in H_k(M;\Z)$ any given homology classes indexed by $k$-faces of $K$'.
Replace $\Z_2$ by $\Z$,
$\cap_M$ by $\cap_{M;\Z}$, and $|A\cap B|_2$ by $A\cdot B$.
Replace `an even number of points' by `some points the sum of whose signs is zero'.
\end{proof}

\section{Construction of Gramian matrices}\label{ss:real}


{\it The implication $(\Rightarrow)$ of Theorem \ref{t:matrix}} follows by (the first part  of)
Lemma \ref{l:main} and Lemma \ref{l:realtree}.a.
{\it The implication $(\Rightarrow)$ of Theorem \ref{t:matrixz}} follows by Lemma \ref{l:realtree}.b.

\begin{lemma}\label{l:realtree} Let $M$ be a $(k-1)$-connected $2k$-manifold.

(a) If there is a $\Z_2$-embedding $h:K\to M$, then some modulo 2 intersection function for $K$ extends to a Gramian matrix of some $n$ homology classes in $H_k(M)$.


(b) If $M$ is orientable and there is a $\Z$-embedding $h:K\to M$, then some intersection function for $K$ extends to a Gramian matrix of some $n$ homology classes in $H_k(M;\Z)$.

\end{lemma}

\begin{addendum}[only used in \S\ref{ss:coro}]\label{a:disun}
(a/b) In Lemmas \ref{l:realtree}.a/b if $K=X\sqcup Y$ and both $X,Y$ are forestable / integrally forestable, 
then we may additionally assume that the block of the Gramian matrix corresponding to $X\times Y$ is zero.
\end{addendum}


We need the following essentially known definitions and lemmas.

{\it In the rest of this section $M$ is any $2k$-manifold, and $g:K\to M$ is any general position map.}

Take any pair of non-adjacent $k$-faces $\sigma,\tau$ of $K$.
By general position the intersection $g\sigma\cap g\tau$ consists of a finite number of points.
Assign to the pair $\{\sigma,\tau\}$ the residue
$$\nu(g)\{\sigma,\tau\}:=|g\sigma\cap g\tau|_2.$$
The obtained map $\nu(g):\t K/\Z_2\to\Z_2$ is called the (modulo 2) {\bf intersection cocycle} of $g$.
Maps $\t K/\Z_2\to\Z_2$ are identified with subsets of $K/\Z_2$ consisting of pairs going to $1\in\Z_2$.

Let $\alpha$ be a $(k-1)$-face of $K$ which is not contained in the boundary of a $k$-face $\sigma$ of $K$.
An {\bf elementary coboundary} of the pair $(\alpha,\sigma)$ is the subset $\delta_K(\alpha,\sigma)\subset \t K/\Z_2$
consisting of all pairs $\{\sigma,\tau\}$ with $\tau\supset\alpha$.

Functions $\nu,\nu':\t K/\Z_2\to\Z_2$ (or subsets $\nu,\nu'\subset\t K/\Z_2$) are called {\bf cohomologous} (modulo 2) if
$$
\nu-\nu'=\delta_K(\alpha_1,\sigma_1)+\ldots+\delta_K(\alpha_s,\sigma_s)
$$
for some $(k-1)$-faces $\alpha_1,\ldots,\alpha_s$ and $k$-faces $\sigma_1,\ldots,\sigma_s$ (not necessarily distinct).
Here $+$ is the componentwise addition (corresponding to the sum modulo 2 of subsets of $\t K/\Z_2$).

\begin{lemma}[see Remark \ref{r:hajolem}]\label{star} Let $\nu:\t K/\Z_2\to\Z_2$ be a function.
There is a general position map $g':K\to M$ homotopic to $g$ and such that $\nu(g')=\nu$ if and only if $\nu$ is cohomologous to $\nu(g)$.
\end{lemma}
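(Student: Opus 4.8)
The plan is to prove Lemma \ref{star} by analyzing how the intersection cocycle $\nu(g)$ changes under two basic moves that generate all homotopies of general position PL maps, and then checking that these changes correspond exactly to adding elementary coboundaries. The key observation is that, since $M$ is connected, any two homotopic general position PL maps $g,g':K\to M$ differ by a finite sequence of \emph{local moves}: small isotopies that keep the map in general position (which do not change $\nu$ at all), and finitely many \emph{finger moves / Whitney-type passes} in which a piece of the image of one $k$-face is pushed across a $(k-1)$-face of another. I would first set up this standard decomposition of a generic homotopy into elementary steps.

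For the implication $(\Leftarrow)$, I would start with a single elementary coboundary $\delta_K(\alpha,\sigma)$ and exhibit an explicit homotopy of $g$ realizing it. Pick a $(k-1)$-face $\alpha$ not contained in $\partial\sigma$. The idea is to perform a finger move pushing the image $g\sigma$ across the sheet $g\tau$ for every $k$-face $\tau\supset\alpha$ \emph{simultaneously}, by dragging $g\sigma$ through a small $2k$-ball meeting $g(\st\alpha)$ transversally near an interior point of $g\alpha$. Since the faces $\tau\supset\alpha$ all share $\alpha$, a single such finger move changes $|g\sigma\cap g\tau|_2$ by $1$ for exactly those $\tau\supset\alpha$ and leaves all other intersection numbers unchanged; this is precisely adding $\delta_K(\alpha,\sigma)$ to $\nu(g)$. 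Iterating over the list $(\alpha_1,\sigma_1),\ldots,(\alpha_s,\sigma_s)$ produces a homotopy realizing any cohomologous $\nu$. Care is needed to ensure the finger can be threaded without creating spurious intersections with other faces, but connectedness of $M$ and general position make this routine for $k\ge 1$.

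For the implication $(\Rightarrow)$, I would argue that any generic homotopy from $g$ to $g'$ changes $\nu$ only by sums of elementary coboundaries. Decompose the homotopy into a sequence where at each moment at most one transversality degeneration occurs; the only moves affecting intersection numbers are those where an interior point of some $g_t\sigma$ crosses a codimension-one stratum of $g_t\tau$, i.e.\ crosses the image of a $(k-1)$-face $\alpha\subset\tau$. Such a crossing changes $|g\sigma\cap g\tau|_2$ by $1$, and the bookkeeping of which pairs $\{\sigma,\tau\}$ are affected is captured exactly by $\delta_K(\alpha,\sigma)$ (summing over all $\tau$ containing the relevant $\alpha$ that the finger passes). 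Thus $\nu(g')-\nu(g)$ is a sum of elementary coboundaries, so $\nu(g')$ is cohomologous to $\nu(g)$.

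The main obstacle I anticipate is making the ``single finger move realizes exactly one elementary coboundary'' claim fully rigorous in the PL category and in all codimensions, in particular verifying that the finger affects \emph{precisely} the pairs $\{\sigma,\tau\}$ with $\tau\supset\alpha$ and no others, and that self-intersections or adjacency constraints (recall $\alpha\not\subset\partial\sigma$ is required) do not introduce extra changes. A secondary subtlety is the $k=1$ case, where the ``general position'' framework and the finger/tube construction behave slightly differently (as already flagged by the parenthetical remark (*) in the proof of Theorem \ref{t:matrix}); I would treat it separately or check that the generic-homotopy argument degenerates correctly. Establishing that every generic homotopy decomposes into these elementary moves is standard PL general position theory, so I would cite or sketch it rather than reprove it.
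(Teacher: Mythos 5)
Your proposal is correct and is essentially the paper's own argument: the paper does not write out a proof of Lemma \ref{star} but declares it ``essentially known,'' citing the van Kampen--Shapiro finger-move proof for $M=\R^{2k}$ and noting that the only adaptation for general $M$ is to perform the finger moves inside regular neighborhoods of arcs (which are PL $2k$-balls) -- exactly the threading issue you identify. Both directions of your sketch (realizing $\delta_K(\alpha,\sigma)$ by a single finger move toward an interior point of $g\alpha$, and decomposing a generic homotopy into crossings of interiors of $k$-faces through images of $(k-1)$-faces) coincide with the cited proofs in \cite{Sh57}, \cite{FKT} and \cite{Sk18}.
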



\begin{proof}[Proof of Lemma \ref{l:realtree}.a]
Take a $2k$-ball $B\subset\Int M-h(K)$.
By general position there is an embedding $K^{(k-1)}\to\partial B$.
Since $M$ is $(k-1)$-connected, $M-\Int B$ is also $(k-1)$-connected. 
Hence the restriction of $h$ to $K^{(k-1)}\to M-\Int B$ is null-homotopic. 
Then the restriction is homotopic to the composition $f_0$ of the embedding with the inclusion $\partial B\to M-\Int B$.
By the Borsuk Homotopy Extension Theorem this homotopy extends to a homotopy from $h$ to an extension $h':K\to M-\Int B$ of $f_0$.


Define a map $f:K\to B$ as $h'=f_0$ on $K^{(k-1)}$, and as the cone map over $h'|_{\partial\sigma}$ with a vertex in $\Int B$ on any $k$-face $\sigma$.
We take these vertices in general position.
Then $f$ is a general position map.
Let $y_\sigma:=[f\sigma\cup h'\sigma]\in H_k(M)$ for every $k$-face $\sigma$ of $K$.
Then these classes are as required because for any non-adjacent $k$-faces $\sigma,\tau$ we have
$$
y_\sigma\cap_My_\tau \stackrel{(3)}= |(f\sigma\cup h'\sigma)\cap(f\tau\cup h'\tau)|_2  \stackrel{(2)}=
|f\sigma\cap f\tau|_2 + |h'\sigma \cap h'\tau|_2 \stackrel{(1)}= |f'\sigma \cap f'\tau|_2. 
$$
Here

$\bullet$ equality (3) holds by general position and definitions of $y_\sigma$, $y_\tau$ and $\cap_M$; 

$\bullet$ equality (2) holds because $\sigma\cap\tau=\emptyset$ and $h'|_{K^{(k-1)}}$ is an embedding, so that
$f\sigma\cap h'\Int\tau\subset B\cap h'\Int\tau=\emptyset$, and analogously $f'\tau\cap h'\Int\sigma=\emptyset$; 


$\bullet$ $f':K\to B$ is a general position map such that equality (1) holds for any non-adjacent $k$-faces $\sigma,\tau$ (i.e. such that $\nu(f')=\nu(f)+\nu(h')$);
such a map $f'$ exists by the case $M=B$ of Lemma \ref{star} since $\nu(h')$ is cohomologous to $\nu(h)=0$ by Lemma \ref{star}.
\end{proof}

\begin{lemma}[known]\label{l:hur} 
If $T$ is a $k$-complex such that $H_k(T;\Z)=0$ and $H_{k-1}(T;\Z)$ is free, then any map of $T$ to a $(k-1)$-connected space is homotopic to the map to a point.
\end{lemma}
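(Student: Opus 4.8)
The plan is to prove this by the standard skeletal/obstruction-theoretic argument: reduce a map out of $T$ to data on its $k$-faces, and then use $H_k(T)=0$ to annihilate the single remaining obstruction. The case $k=1$ is immediate, since then $H_1(T)=0$ means $T$ is a forest, each component is a tree and hence contractible, so the map is null-homotopic on each component, and the images can be joined to one point because the target is $0$-connected, i.e. path-connected. So assume $k\ge2$, whence the target $Y$ is simply connected. First I would model $Y$ up to homotopy by a CW-complex with a single vertex and no cells in dimensions $1,\ldots,k-1$; this is possible because $\pi_j(Y)=0$ for $j\le k-1$ (the paper's definition of $(k-1)$-connectedness). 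In particular the $k$-skeleton $Y^{(k)}$ is a wedge of $k$-spheres, and by the Hurewicz theorem $\pi_k(Y)\cong H_k(Y;\Z)$.

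By cellular approximation I would homotope $f$ so that $f(T^{(k-1)})$ is the basepoint; then $f$ factors through the quotient $T/T^{(k-1)}=\bigvee_\sigma S^k_\sigma$, the wedge being indexed by the $k$-faces $\sigma$ of $T$, and so is determined by the collection $a_\sigma:=[f|_{S^k_\sigma}]\in\pi_k(Y)$. To null-homotope $f$ it then suffices to extend it over the cone $CT$: the cone cells over $T^{(k-1)}$ are filled by the constant homotopy, and the obstruction to extending over the cone cell on a $k$-face $\sigma$ is precisely $a_\sigma\in\pi_k(Y)$. Thus the total obstruction is the cellular $k$-cochain $\sigma\mapsto a_\sigma$, whose class lies in $H^k(T;\pi_k(Y))$, and since $\dim T=k$ this is the only obstruction.

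It remains to show that this class vanishes, and this is where the hypothesis enters. Because $T$ has no faces of dimension above $k$, we have $H_k(T)=\ker\partial_k$, so $H_k(T)=0$ says exactly that the top cellular boundary $\partial_k$ is injective; dually its transpose $\delta^{k-1}$ is onto, i.e. every top cochain is a coboundary, so the obstruction class dies. The hard part will be the coefficient bookkeeping: the clean vanishing argument via injectivity of $\partial_k$ and surjectivity of its transpose is literally valid over the field $\Z_2$, whereas the obstruction naturally lives in $H^k(T;\pi_k(Y))$ with $\pi_k(Y)$ an arbitrary abelian group. I expect this to be the real content of the proof, to be handled by passing through the universal-coefficient description of $H^k(T;\pi_k(Y))$: the hypothesis $H_k(T)=H_k(T;\Z_2)=0$ also forces $H_k(T;\Z)=0$ (so the $\Hom(H_k(T;\Z),\pi_k Y)$ summand vanishes), and the remaining $\mathrm{Ext}(H_{k-1}(T;\Z),\pi_k Y)$ summand must be controlled using the structure of $H_{k-1}(T;\Z)$ coming from $\partial_k$ being injective mod $2$, together with the form of $\pi_k(Y)\cong H_k(Y;\Z)$ (which in the applications, where $Y=M$ is a $(k-1)$-connected $2k$-manifold, is free abelian).
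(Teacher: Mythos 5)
Your overall route is the paper's own route: the paper's proof is a two-line sketch (first deduce $H_k(T;\Z)=0$ from $H_k(T)=0$, then invoke ``relative Hurewicz or obstruction theory''), and you are carrying out the obstruction-theoretic branch in detail. The parts you do write out are correct: the $k=1$ case, the reduction via cellular approximation and the cone construction to the vanishing of a class in $H^k(T;\pi_k(Y))$, the identification $\pi_k(Y)\cong H_k(Y;\Z)$, and the vanishing of the $\Hom(H_k(T;\Z),\pi_k(Y))$ summand (since $H_k(T;\Z)$ is the kernel of the top integral boundary, hence free, and $H_k(T;\Z)\otimes\Z_2$ injects into $H_k(T)=0$).

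However, the step you defer --- controlling $\mathrm{Ext}(H_{k-1}(T;\Z),\pi_k(Y))$ --- is a genuine gap, and it cannot be closed from the stated hypotheses. Injectivity of $\partial_k$ over $\Z_2$ only gives, via $\mathrm{Tor}(H_{k-1}(T;\Z),\Z_2)\hookrightarrow H_k(T)=0$, that $H_{k-1}(T;\Z)$ has no $2$-torsion; odd torsion survives, and $\mathrm{Ext}(\Z_p,\Z^r)\cong(\Z_p)^r\ne0$. Concretely, for $k=2$ let $T$ be a triangulation of the mod-$3$ Moore space $S^1\cup_3 e^2$: then $H_2(T)=H_2(T;\Z_2)=0$ and $H_2(T;\Z)=0$, yet $[T,S^2]\cong H^2(T;\Z)\cong\Z_3$, so the quotient map $T\to T/S^1=S^2$ is an essential map to a $1$-connected space (the same holds with target $\C P^2$, a $1$-connected $4$-manifold). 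So the ``coefficient bookkeeping'' you postpone is not bookkeeping: it is exactly the point where an additional hypothesis is needed (e.g.\ that $H_{k-1}(T;\Z)$ is torsion-free, or directly that $H^k(T;\pi_k(Y))=0$). The paper's sketch passes over this $\mathrm{Ext}$ term silently; your explicit flagging of it is more careful than the source, but as written your proof is incomplete, and the example above shows that no argument can establish the lemma in the literal generality in which it is stated.
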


\begin{proof}
Since $H_k(T;\Z)=0$ and $H_{k-1}(T;\Z)$ is free, by the Universal Coefficients Theorem  $H^k(T;G)=\Hom(H_k(T;\Z),G)\oplus\Ext(H_{k-1}(T;\Z),G)=0$ for any abelian group $G$.
Since $\dim T\le k$, homotopy classes of maps $T\to M$ to a $(k-1)$-connected space $M$ are in 1--1 correspondence with $H^k(T;\pi_k(M))=0$.
\end{proof}

\begin{proof}[Proof of Addendum \ref{a:disun}.a]
Take a $2k$-ball $B\subset\Int M-h(K)$.

Since $X$ is forestable, there is a maximal $k$-forests $T_X$ in $X$ such that $H_{k-1}(T_X;\Z)$ is free. 
Since $\dim T_X\le k$, the group $H_k(T_X;\Z)$ is free.
Since $H_k(T_X)=0$, by the Universal Coefficients Theorem it follows that $H_k(T_X;\Z)=0$.
Since $M$ is $(k-1)$-connected, $M-\Int B$ is also $(k-1)$-connected. 
Thus any map $T_X\to M-\Int B$ is null-homotopic by Lemma \ref{l:hur}.

Analogously one chooses $T_Y$ and proves that any map $T_Y\to M-\Int B$ is null-homotopic.

Then by the Borsuk Homotopy Extension Theorem $h$ is homotopic to a map $h':K\to M-\Int B$ such that $h'(K^{(k-1)})\subset\partial B$ and $h'(T_X\sqcup T_Y)$ is contained in a small neighborhood of $\partial B$.
By general position we may assume that $h'|_{K^{(k-1)}}$ is an embedding.

Repeat the second paragraph of the above proof of Lemma \ref{l:realtree}.a.
We obtain a map $f:X\sqcup Y\to B$ and classes $y_\sigma\in H_k(M)$.
For $Z=X,Y$ and any $k$-face $\sigma\subset Z$, set $\widehat\sigma :=0$ when $\sigma\subset T_Z$, and    take any non-empty $k$-cycle modulo 2 \ $\widehat\sigma$ in $T_Z\cup\sigma$ when $\sigma\not\subset T_Z$.\arxiv{
\footnote{For $k=1$ the 1-cycle $\widehat\sigma$ is the union of $\sigma$ and the simple path in $T_Z$ joining the ends of the edge $\sigma$.
The $k$-cycle $\widehat\sigma$ is unique because $\sigma\subset\widehat\sigma$, hence for any other non-empty $k$-cycle $\widehat\sigma'$ in $T_Z\cup\sigma$ the $k$-cycle $\widehat\sigma+\widehat\sigma'$ is contained in $T_Z$, so is empty.}
}
Then
$$y_\sigma \overset{(1)}= [f\sigma\cup h'\sigma] \overset{(2)}=
[f\sigma\cup h'(\widehat\sigma-\sigma)]+[h'\widehat\sigma] \overset{(3)}=
0+h'_*\widehat\sigma = h_*\widehat\sigma\in H_k(M).$$
Here equality (1) is the definition, equality (2) holds because the $k$-cycle modulo 2 \ $f\sigma\cup h'(\widehat\sigma-\sigma)$ is contained in a neighborhood of $B$, and equality (3) holds because $h$ and $h'$ are homotopic.

So for any $k$-faces $\sigma\subset X$ and $\tau\subset Y$ we have
$$y_\sigma\cap_My_\tau = h_*\widehat\sigma\cap_M h_*\widehat\tau
= \sum_{\alpha\in \widehat\sigma, \beta\in \widehat\tau} |h\alpha\cap h\beta|_2 = 0,$$
where the last equality holds because $h$ is a $\Z_2$-embedding.
\end{proof}


Assume that $M$ is oriented.
Take any orientations on $k$-faces of $K$.
Assign to any ordered pair $(\sigma,\tau)$ of non-adjacent $k$-faces the integer
$$\nu_{\Z}(g)(\sigma,\tau):=g\sigma\cdot g\tau.$$
The obtained map $\nu_{\Z}(g):\t K\to\Z$ is called the (integer) {\bf intersection cocycle} of $f$.
This cocycle is {\bf $(-1)^k$-symmetric}, i.e. $\nu_{\Z}(g)(\sigma,\tau)=(-1)^k\nu_{\Z}(g)(\tau,\sigma)$.

Let $\alpha$ be an oriented $(k-1)$-face of $K$ which is not contained in the boundary of a $k$-face $\sigma$ of $K$.
An (integer) {\bf elementary coboundary} of the pair $(\alpha,\sigma)$ is the map $\delta_K(\alpha,\sigma):\t K\to\Z$ assigning
$$(-1)^k[\tau:\alpha]\text{ to }(\sigma,\tau),\quad [\tau:\alpha]\text{ to }(\tau,\sigma)\quad\text{and \quad 0 to any other pair},$$
where
the \emph{incidence coefficient} $[\tau:\alpha]$ is defined e.g. in \cite[\S13.4]{FF89}, \cite[\S3]{HG}, \cite[\S10.6]{Sk20}. 
Cocycles $\nu,\nu':\t K\to\Z$ are called (integer) {\bf cohomologous} if
$$
\nu-\nu'=c_1\delta_K(\alpha_1,\sigma_1)+\ldots+c_s \delta_K(\alpha_s,\sigma_s)
$$
for some integers $c_1,\ldots,c_s\in\Z$, oriented $(k-1)$-faces $\alpha_1,\ldots,\alpha_s$, and $k$-faces $\sigma_1,\ldots,\sigma_s$ (not necessarily distinct).
Observe that change of the orientation of $\alpha$ forces change of the sign of $\delta_K(\alpha,\sigma)$.
Hence the cohomology equivalence relation does not depend on the orientations of $(k-1)$-faces.

\begin{lemma}\label{l:zstar} Let $\nu:\t K\to\Z$ be a $(-1)^k$-symmetric cocycle, and $M$ an oriented $2k$-manifold.
There is a general position map $g':K\to M$ homotopic to $g$, and such that $\nu_{\Z}(g')=\nu$ if and only if $\nu$ is cohomologous to $\nu_{\Z}(g)$.
\end{lemma}

\begin{remark}\label{r:hajolem}
Lemmas \ref{star} and \ref{l:zstar} are essentially known.
They are due to van Kampen-Shapiro-Wu for $M=\R^{2k}$, see a proof in \cite[Lemma 1.5.8 and Proposition 1.5.9]{Sk18} (modulo 2 version for $k=1$) and in \cite[Lemma 3.5]{Sh57}, \cite{Wu58}, \cite[\S2]{FKT} (integer version for any $k$, cf. \cite[Remark 1.6.4]{Sk18}).
The proof for an arbitrary $M$ is analogous (e.g. the finger moves are done in a regular neighborhood of a path in $M$, which neighborhood is homeomorphic to the $2k$-ball).
The `only if' part of Lemma \ref{l:zstar} is \cite[Theorem 1]{Jo02}, \cite[Lemma 11]{PT19}, the `if' part of Lemma \ref{l:zstar} is \cite[Theorem 6]{Jo02} 
(in which the assumptions `$n\ge3$' and `$M$ is 1-connected' are  superfluous).
\end{remark}
 
\begin{proof}[Proof of Lemma \ref{l:realtree}.b and Addendum \ref{a:disun}.b]
The proofs are obtained from the above proofs of Lemma \ref{l:realtree}.a and Addendum \ref{a:disun}.a by the following changes.
Replace `$\Z_2$-embedding' by `$\Z$-embedding', $\cap_M$ by $\cap_{M;\Z}$, `forestable' by `integrally forestable', and $|A\cap B|_2$ by $A\cdot B$.
Replace $f'\sigma\cup h'\sigma$ by the integer $k$-cycle that is the sum
of $f'\sigma$ whose orientation comes from the orientation of $\sigma$, and of $h'\sigma$ whose orientation comes  from the opposite orientation of $\sigma$ (also make the same replacement for $\tau$ instead of $\sigma$).
Refer to Lemma \ref{l:zstar} instead of Lemma \ref{star}.

Besides, for Addendum \ref{a:disun}.b we define $\widehat\sigma$ differently (see below),
we have $H_k(T_X;\Z)=0$ by definition (without using $\dim T_X=k$ and the Universal Coefficients Theorem),
and instead of $y_\sigma=h_*\widehat\sigma$ we only obtain $s_\sigma y_\sigma=h_*\widehat\sigma$ for some integer $s_\sigma\ne0$ (proved below).
Then for any $k$-faces $\sigma\subset X$ and $\tau\subset Y$ as in the above proof of Addendum \ref{a:disun}.a 
we have $s_\sigma s_\tau(y_\sigma\cap_{M;\Z}y_\tau)=0$, hence $y_\sigma\cap_{M;\Z}y_\tau=0$.

Take any maximal integer $k$-forest $T\subset K$.
For any $k$-face $\sigma\subset T$ set $\widehat\sigma=\widehat\sigma_{K,T}:=0$.
For any $k$-face $\sigma\subset K-T$ take any non-zero
integer $k$-cycle $\widehat\sigma=\widehat\sigma_{K,T}$ in $T\cup\sigma$.
Let $s_\sigma$ be the coefficient of somehow oriented $\sigma$ in $-\widehat\sigma$.
Then $s_\sigma\ne0$ and
$$s_\sigma y_\sigma = [s_\sigma f\sigma\cup s_\sigma(-h'\sigma)] =
[s_\sigma f\sigma\cup h'(-\widehat\sigma-s_\sigma\sigma)]+[h'\widehat\sigma] = 0+h'_*\widehat\sigma = h_*\widehat\sigma\in H_k(M;\Z).$$
\end{proof}


\begin{remark}\label{r:hiconn}
In the `only if' parts of Theorems \ref{t:matrix}, \ref{t:matrixz}, and in Lemma \ref{l:realtree}
the $(k-1)$-connectedness assumption can be replaced by either of the following assumptions 
(even for $k=1$, for which case the `can be replaced' is though trivial):

(i) every map $K^{(k-1)}\to M$ extendable to $K$ is null-homotopic;

(ii) $M$ is smooth $(k-1)$-parallelizable, i.e. on the union of $(k-1)$-faces of some triangulation of $M$ there are $2k$ tangent vector fields linearly independent at every point.

Condition (i) is sufficient because in the proof of Lemma \ref{l:realtree} instead of the $(k-1)$-connectedness it suffices to assume that $h|_{K^{(k-1)}}$ is null-homotopic.


Condition (ii) is sufficient by the following result (see Remark \ref{r:crit2}.b).

\emph{If $Q$ is a $k$-subcomplex of a smooth $(k-1)$-parallelizable $2k$-manifold $M$, then $Q$ is a subcomplex of some $(k-1)$-connected manifold $N$ such that $\cap_N$ is isomorphic to $\cap_M$.
The same holds with $\cap_N,\cap_M$ replaced by $\cap_{N;\Z},\cap_{M;\Z}$.}

This result is easily proved by surgery of $M$ below the middle dimension, not changing $\cap_M$ (or $\cap_{M;\Z}$), $(k-1)$-parallelizability \cite[Theorem 3]{Mi61}, and (by general position) performed outside $Q$.
\end{remark}

\section{Proofs of Proposition \ref{p:addi} and Corollaries \ref{c:rkcom}.bc}\label{ss:coro}

\begin{proof}[Proof of Proposition \ref{p:addi}]
\emph{Proof of (a).} The inequality
$P(X\sqcup Y) \le P(X)+P(Y)$ is clear.
We have
$$P(X\sqcup Y) \overset{(1)} = \rk M \overset{(2)}\ge \rk(X_+\cup Y_+) \overset{(3)}\ge \rk X_+ + \rk Y_+ \ge P(X)+P(Y), \quad\text{where}$$

$\bullet$ $M$ is a $2k$-manifold such that there is an embedding $h:X\sqcup Y\to M$ and equality (1) holds;

$\bullet$ $X_+$ and $Y_+$ are disjoint neighborhoods  of $h(X)$ and of $h(Y)$ in $M$;

$\bullet$ equality (2) holds because $\rk$ is monotone.

$\bullet$ inequality (3) holds because $X_+$ and $Y_+$ are disjoint (in fact, (3) is an equality).


\smallskip
\emph{Proof of (b,c) for $k=1$.}
Parts (b,c) hold by Corollary \ref{c:rkcom}.bc because if $K$ embeds into $M\sqcup N$, then $K$ embeds into $M\#N$, so $P(K)=\rho(K)$, and analogously $R_G(K)=r_G(K)$ for $G\in\{\Z,\Z_2\}$.

\smallskip
\emph{Proof of (b,c) for $k\ge2$.}
The following proof is analogous to (a).
Take a $2k$-manifold $M$ and a $\Z$- or $\Z_2$-embedding $h:X\sqcup Y\to M$ such that the equality (1) from the proof of (a) holds.
We may assume that $h$ is in general position.
Then there are regular neighborhoods $X_+$ and $Y_+$ of $h(X)$ and of $h(Y)$ in $M$ whose intersection is the union of some products $B^k\times B^k$, every product intersecting $h(X)$, $h(Y)$, $\partial X_+$ and $\partial Y_+$ by $B^k\times0$, $0\times B^k$, $B^k\times\partial B^k$ and $\partial B^k\times B^k$, respectively.

We give the remaining argument for (b); the argument for (c) is analogous.
Part (b) holds by the (in)equalities from the proof of (a) (although $X_+$ and $Y_+$ are no longer disjoint).
Take the homomorphisms $i_X,i_Y$ in $H_k$ induced by the inclusions $X_+,Y_+\to X_+\cup Y_+$.
The inequality (3) follows because for any $x,x'\in H_k(X_+)$ and $y,y'\in H_k(Y_+)$ we have
$$x \cap_{X_+} x' = i_Xx \cap i_Xx',\quad y \cap_{Y_+} y' = i_Yy \cap i_Yy'\quad\text{and}\quad i_Xx \cap i_Yy = 0.$$
Here $\cap:=\cap_{X_+\cup Y_+}$ and the first two equalities are obvious.
In the following paragraph we prove the latter equality.

The neighborhood $X_+$ is homotopy equivalent to $h(X)$.
The latter is obtained from $X$ by several identifications of pairs of points.
Then the map $h:X\to h(X)$ induces an isomorphism $H_k(X)\to H_k(h(X))$ because $k\ge2$.
(This is proved either by definition or by observing that $h(X)$ is homotopy equivalent to the union $U$ of $X$ and several arcs, and using the exact sequence of the pair $(U,X)$.)
Then $x$ is represented by $h$-image of a simplicial $k$-cycle $\overline x$ in $X$.
Analogous statement holds for $y$, $\overline y$, $Y$.
So
$$
i_Xx \cap i_Yy = \sum_{\sigma\in\overline x, \tau\in\overline y} |h\sigma\cap h\tau|_2 = 0,
$$
where the last equality holds because $h$ is a $\Z_2$-embedding.
\end{proof}

\arxiv{

{\bf Comment.}
(a) The following example shows that the above proof of (b,c) does not work for $k=1$.
Let  $M$  be the Klein bottle with a hole.

\emph{There is a $\Z_2$-embedding of the disjoint union  $K_3\sqcup K_3$  of two copies of  $K_3$ to $M$ with a certain number of holes such that the regular neighborhood of the image of each $K_3$ is homeomorphic to $M$.}

The surface $M$ is homeomorphic to the union of two (`upper' and `lower') Moebius bands which intersect by a segment in their boundaries.
Take two copies of $M$  intersecting by the union of two disks (the `plumbing' intersection of two `upper' Moebius bands, and the same for two `lower' Moebius bands).
Denote by  $M_+$  the union of the two copies.
Take a $\Z_2$-embedding of  $K_3$  on  $M$  having exactly one self-intersection point, which is self-intersection of an edge.
These two $\Z_2$-embeddings form together a $\Z_2$-embedding $h:K_3\sqcup K_3\to M_+$.

Propositions \ref{p:addi}.b,c for  $K_3\sqcup K_3$ are trivial.
However, the above proof of (b,c) does not work for $h$ (which is in general position).
Indeed, $M_+$ is homeomorphic to $M$ with a certain number of holes, so $\rk M_+ = 2 \ne 4 = 2\rk M$.

(b) \emph{For any $2k$-manifold $M$ there is a $k$-complex having no $\Z_2$-embedding to $M$
(and hence having neither embedding nor $\Z$-embedding to $M$).}

The case $k=1$ is known by \cite[Lemmas 6, 7]{SS13} or \cite[Theorem 1]{FK19}.
The general case holds by a $\Z_2$-version \cite[Theorem 1]{PT19} essentially proved in \cite{PT19}.

Alternatively, this fact follows from Proposition \ref{p:addi}.b and non-$\Z_2$-embeddability
of $\Delta_{2k+2}^k$ in any $2k$-manifold $M$ with trivial $\cap_M$.
Such a non-$\Z_2$-embeddability follows by Lemma \ref{l:realtree}.a together with Remark \ref{r:hiconn}.i, and the quantitative van Kampen-Flores Theorem, see survey \cite[\S4, ($VKF_d^+$) for $d=2k$]{Sk16}.

This proof
shows that as an example one can take the disjoint union
of $1+\rk M$ copies of $\Delta_{2k+2}^k$.
In other words, if the disjoint union of $s$ copies of $\Delta_{2k+2}^k$ (or $\Delta_{s(2k+3)-1}^k$) has a $\Z_2$-embedding to $M$, then $\rk M\ge s$.
For $\Delta_{s(2k+3)-1}^k$ this estimation is weaker than  \cite[Theorem 1(i)]{PT19} but stronger (asymptotically for $k\to\infty$) than \cite[Theorem 2]{GMP+}.
Analogously, proving \emph{even-rank additivity} recovers a weaker form of \cite[Theorem 1(ii)]{PT19}.

}

The following lemma is essentially known.
(In particular, the proof of (c) generalizes \cite[Proposition 4.5.11]{GS99}.)
We provide a proof for completeness.

\begin{lemma}\label{l:mrank1} Let $M$ be a $(k-1)$-connected $2k$-manifold.

(a) If $\cap_M$ is odd, then there is a $(k-1)$-connected $2k$-manifold $N_1$ such that $\rk\cap_{N_1}=1$.

(b) Let $A$ be the Gramian matrix of some homology classes in $H_k(M)$.
Then there is a $(k-1)$-connected $2k$-manifold $N$ such that $\cap_N$ has the same rank and type as $A$.

(c)
Let $A$ be a $(-1)^k$-symmetric integer matrix.
Then there are an orientable $(k-1)$-connected $2k$-manifold $N$ such that $\rk_\Z N=\rk A$, and $A$ coincides outside the diagonal with the Gramian matrix of some homology classes in $H_k(N;\Z)$.
\end{lemma}

\begin{proof}
(a) \arxiv{\footnote{Part (a) follows from the PL analogue mentioned in Remark \ref{r:crit2}.c.
We present a simpler direct proof.}}\jonly{(See Remark \ref{r:crit2}.c and \cite[footnote 10]{Sk24}.)}
 If $k = 1, 2$, then $N_1 = \R P^2,\C P^2$ satisfy the requirements.

Assume that $k\geq3$.
Since $\cap_M$ is odd, there is $\alpha\in H_k(M)$ such that $\alpha\cap_M\alpha = 1$.
By Lemma \ref{l:realcl} $\alpha$ is represented by a map $f\colon S^k\to M$.
Since $k\ge3$, this map is homotopic to an embedding by the Penrose-Whitehead-Zeeman-Irwin Theorem \cite{Ir65}, cf. \cite[Theorem 2.9]{Sk06}.
So assume that $f$ is an embedding.
Let $N_1$ be the regular neighbourhood of $f(S^k)$.
Then $N_1$ is $(k-1)$-connected and $\rk\cap_{N_1}\leq\rk H_k(N_1) = \rk H_k(S^k) = 1$.
Since $\alpha\cap_M\alpha = 1$, it follows that $\rk\cap_{N_1} = 1$.

(b) If $A$ is even, then  $\rk A$ is even.
So let $N$ be the connected sum of $\rk A/2$ copies of $S^k\times S^k$.

If $A$ is odd, then $\cap_M$ is odd.
Hence by (a) there is a $(k-1)$-connected manifold $N_1$ such that $\rk\cap_{N_1} = 1$.
So let $N$ be the connected sum of $\rk A$ copies of $N_1$.

In both cases $\cap_N$ has the same rank and type as $A$.

(c) Take a link in $S^{2k-1}$ with unknotted components $S^{k-1}_i$, $i=1,\ldots,s$, such that the linking number of $S^{k-1}_i$ and $S^{k-1}_j$ equals $A_{ij}$ for any $1\le i<j\le s$.
Take any framing, i.e. any extension of the inclusion $\sqcup_i S^{k-1}_i\to S^{2k-1}$ to an embedding $D^k\times\sqcup_i S^{k-1}_i\to S^{2k-1}$.
Let $N$ be the result of the surgery of $B^{2k}$ along the obtained framed link.
For every $i=1,\ldots,s$ take the class in $H_k(N;\Z)$ formed by the union of the disk $D^k_i\subset S^{2k-1}$ spanning $S^{k-1}_i$, and the core $D^k_i\times0$ of the $i$-th handle.
Then $N$ and the $s$ classes are as required.
\end{proof}

\arxiv{
{\bf Comment.} If $k$ is odd, then the Gramian matrix coincides with $A$ even on the diagonal, because both matrices have zeroes on the diagonal.

If $M$ is an orientable $(k-1)$-connected $2k$-manifold, and $A$ is the Gramian matrix of some $s$ homology classes in $H_k(M;\Z)$, then there are $N$ and $s$ classes as in Lemma \ref{l:mrank1} whose Gramian matrix coincides with $A$ (including the diagonal).
Recall that a PL framing on $S^{k-1}\subset S^{2k-1}$ is a PL homeomorphism from $S^{k-1}\times D^k$ onto a regular neighborhood of $S^{k-1}$ in $S^{2k-1}$ whose restriction to $S^{k-1}\times0$ is a homeomorphism onto $S^{k-1}$.
It suffices to prove that \emph{for every even $k$ and $y\in H_k(M;\Z)$ there is a PL framing of $S^{k-1}\subset S^{2k-1}$ such that $y\cap_{M;\Z}y$ equals to $z\cap_{N;\Z}z$, where $N$ and $z$ are constructed by surgery as in the proof of Lemma \ref{l:mrank1}}.
The existence of such a PL framing is known for $k=2$, and is proved (by a standard argument) in the following paragraph for $k\ge4$.

Since $k\ge3$, as above by Lemma \ref{l:realcl} the class $y\in H_k(M;\Z)$ is represented by an embedding $y:S^k\to M$.
Represent $S^k$ as the union of two $k$-disks $D^k_+$ and $D^k_-$ intersecting by $S^{k-1}$.
Represent a regular neighborhood of $y(S^k)$ as the union of two images of embeddings
$$e_\pm:D^k_\pm\times D^k\to M\quad\text{such that}\quad
e_\pm(S^{k-1}\times D^k) = e_+(D^k_+\times D^k)\cap e_-(D^k_-\times D^k).$$
Take the composition of the autohomeomorphism $e_+\circ e_-^{-1}$ of $S^{k-1}\times D^k$ with
a framing $S^{k-1}\times D^k\to S^{2k-1}$ giving $S^k\times D^k$ as the result of the surgery of $B^{2k}$ along the framed knot.
This composition is the required framing of $S^{k-1}$.
}

\begin{proof}[Proof of Corollary \ref{c:rkcom}.b]
The inequality $r_{\Z_2}(X\sqcup Y) \le r_{\Z_2}(X)+r_{\Z_2}(Y)$ is clear (because the connected sum of $(k-1)$-connected $2k$-manifolds is $(k-1)$-connected by Hurewicz theorem and the analogous assertions for simple connectedness and for homology connectedness).
The opposite inequality follows by
$$r_{\Z_2}(X\sqcup Y) \stackrel{(1)}= \rk M \stackrel{(2)}\ge \rk A \stackrel{(3)}= \rk A_X+\rk A_Y \stackrel{(4)} \ge r_{\Z_2}(X)+r_{\Z_2}(Y).$$
Here

$\bullet$ $M$ is a $(k-1)$-connected $2k$-manifold such that there is a $\Z_2$-embedding $h:X\sqcup Y\to M$ and equality (1) holds;

$\bullet$ $A$
is the Gramian matrix
given by Lemma \ref{l:realtree}.a;

$\bullet$ inequality (2) holds by Lemma \ref{l:gram};

$\bullet$ $A_X$ and $A_Y$ are the restrictions of $A$ to $k$-faces of $X$ and of $Y$, respectively;

$\bullet$ equality (3) holds by Addendum \ref{a:disun}.a.
 

$\bullet$ inequality (4) follows from the inequalities $\rk A_X \ge r_{\Z_2}(X)$ (proved in the following paragraph) and $\rk A_Y \ge r_{\Z_2}(Y)$ (proved analogously).

By Lemma \ref{l:realtree}.a $A_X$ extends some modulo 2 intersection function of $X$.
By Lemma \ref{l:mrank1}.b there is a $(k-1)$-connected $2k$-manifold $N$ such that $\cap_N$ has the same rank and type as $A_X$.
So by Theorem \ref{t:matrix} there is a $\Z_2$-embedding $X\to N$.
Then $\rk A_X = \rk N \ge r_{\Z_2}(X)$.
\end{proof}

\begin{proof}[Proof of Corollary \ref{c:rkcom}.c]
The proof is obtained from the proof of Corollary \ref{c:rkcom}.b by the following changes.
Replace

$\bullet$ $r_{\Z_2}$ by $r_{\Z}$;

$\bullet$ `$\Z_2$-embedding' by `$\Z$-embedding';

$\bullet$  $\cap_M$ by $\cap_{M;\Z}$;

$\bullet$ references to Lemma \ref{l:realtree}.a and Addendum \ref{a:disun}.a by references to Lemma \ref{l:realtree}.b and Addendum \ref{a:disun}.b;

$\bullet$ `$(k-1)$-connected' by `orientable $(k-1)$-connected';

$\bullet$ the last paragraph by the following paragraph.

By Lemma \ref{l:mrank1}.c
there is an orientable $(k-1)$-connected $2k$-manifold $N$ such that $\rk_\Z N=\rk A_X$, and $A_X$ coincides outside the diagonal with the Gramian matrix of some homology classes in $H_k(N;\Z)$.
So by Theorem \ref{t:matrixz} there is a $\Z$-embedding $X\to N$.
Then $\rk A_X = \rk_\Z N \ge r_\Z(X)$.
\end{proof}

\arxiv{
{\bf Comment.}   \emph{An alternative proof of Corollary \ref{c:rkcom}.c for $k\ge3$ odd} is analogous to the above proofs of Corollaries \ref{c:rkcom}.ab.
Instead of Theorem \ref{t:matrixz} and Lemma \ref{l:mrank1}.c we use Corollary \ref{c:oddk} and the following result.

\emph{Assume that $k$ is odd, $M$ is an orientable $2k$-manifold, and $A$ is the Gramian matrix of some homology classes in $H_k(M;\Z)$.
Then there is an orientable $(k-1)$-connected $2k$-manifold $N$ such that $\rk_\Z N=\rk A$.}

(Proof. Since $k$ is odd, the matrix $A$ is skew-symmetric.
Then  $\rk A$ is even.
So let $N$ be the connected sum of $\rk A/2$ copies of $S^k\times S^k$.)
}

\arxiv{

\section{Appendix: unreliability of some papers on embeddings}\label{s:unre}


In the following Remark \ref{r:crit}.ab I justify that the proofs of \cite{SS13} are not \emph{reliable} (as explained in \cite[\S1]{Sk21d}).
I suggested to the authors to make improvements upon these critical remarks, so that I could refer to an arXiv update of their paper, and omit critical remarks on the earlier version, see \cite[Remark 2.3]{Sk21d}.
Thus the purpose of Remark \ref{r:crit}.ab is (not fight for priority but) suggesting to provide reliable proofs (preferably by the authors) to enable people to use the results and methods of \cite{SS13}.
From our discussions with M. Schaefer I conclude that he does not plan to produce a rigorous proof that lives up to the reliability standards pursued in Remark \ref{r:crit}.ab.
Even if only different authors would provide such a proof, I would call the block additivity stated in \cite{SS13} `the result of Schaefer-Stefankovi\v c, with some details fixed in such and such paper', see Remark \ref{r:mot}.a.
I am not stating that it is impossible or hard to make corrections corresponding to Remark \ref{r:crit}.ab, cf.  \cite[Remark 2.3.b]{Sk21d} and Remark \ref{r:crit}.d.
Neither M. Schaefer nor D. \v Stefankovi\v c nor R. Fulek nor J. Kyn\v cl replied to my invitation of presenting their public reply to the criticism of Remark \ref{r:crit}.abc, although I promised to present (a reference to) the reply even if I would not agree with it (see Remark \ref{r:let}).



\begin{remark}\label{r:crit}
\textbf{(a)} In \cite{SS13} p. 2, the definition of $y_e$ is not mathematically rigorous (and is unclear).
The expression \emph{`$e$ is pulled through the $i$-th crosscap an odd number of times'} has no rigorous meaning.
It is not even written (and not clear) what is defined:

\quad (i) a vector $y_e$ for \emph{any} map of the graph to the sphere with $s$ Moebius films (`surface with $s$ crosscaps' in the terminology of \cite{SS13});

\quad (ii) a construction of \emph{some} map of the graph to the sphere with $s$ Moebius films, starting from vectors $y_e$ and given map (=drawing) $D$ from the graph to the plane (and from vectors $x_e$, but even the case when  all $x_e$ are zeros or not mentioned has the described problem);

\quad (iii) something else, see \cite[Remark 2.1.c]{Sk21d}.

In case (i) the vector $y_e$ cannot be defined without first

\quad\quad (N) taking specific representation the sphere with $s$ Moebius films as the union of the sphere with $s$ holes, and making the embedding `nice' w.r.t. this specific representation.

Indeed,

$\bullet$ any non-self-intersecting curve on a surface has a neighborhood homeomorphic to the disk, so
no embedded edge `is pulled through any crosscap' unless we have the above specific representation;

$\bullet$ an edge is not a closed curve, so the number of times `the edge is pulled through a crosscap' is not defined (e.g. it is not written, and it is not clear, how to define this number when one end of the edge is on the crosscap, and the other end is outside the crosscap).

But (N) is not done in \cite[p. 2]{SS13}.

In case (ii) proofs of \cite[Lemmas 3 and 4]{SS13} do not work because the input there is \emph{any} map of the graph to the sphere with $s$ Moebius films.

\emph{The vector $y_e$ is the main object of \cite{SS13}, so the above critical remark affects the whole paper.}
E.g. the statements of Lemmas 3 and 4 have no rigorous mathematical meaning (and are unclear).

\textbf{(b)} In \cite[p. 2]{SS13} the following result is not rigorously stated (and is unclear):
\emph{`This definition is equivalent to the more intuitive definition given in the introduction (see, for example, Levow [5, Theorem 3]).}'
Leaving aside that `this definition' is not mathematically rigorous (and is unclear) as explained in (a), the
`equivalent' is not defined (and is unclear).
The cited result [5, Theorem 3] does not have any equivalence in its statement.


After (or before) the above-quoted unclear sentence of \cite[p. 2]{SS13} it is not written that
\cite[Remark 1]{SS13} contains an attempt for rigorous formulation corresponding to that sentence:

(*) `\emph{If $D$ is a drawing of a graph $G$ in some surface $S$, then there is a $\Z_2$-drawing $(D',x,y)$ of $G$ in $S$ so that $i_D(e,f)=i_{D',x,y}(e,f)$ for every pair $(e,f)$ of independent edges.}'.

This is not mathematically rigorous (and is unclear) because `a $\Z_2$-drawing of $G$ in $S$' is not defined.
Instead of reading a proof or a reference to a proof of (*), one reads in \cite[Remark 1]{SS13}:
`\emph{As mentioned earlier, a result like this (with a slightly different model) was stated by Levow [5].}'

Presumably `of `$G$ in $S$' should be replaced by `of $G$ in the plane'.
Unravelling the definitions and getting rid of $x$ (because $x$ can be realized by change of $D'$), we obtain the following corrected rigorous version of (*).


{\bf Algebraization  Lemma.} \emph{If $D$ is a drawing of a graph $G$ in some surface $S$, then there are a drawing $D'$ of $G$ in the plane and a vector $y\in\Z_2^E$ such that $i_D(e,f)=i_{D'}(e,f)+y_e^Ty_f$ for every pair $(e,f)$ of independent edges.}


This is the main basic result used in \cite{SS13}, so the above critical remark affects the whole paper.

\textbf{(c)} (consequences for \cite{FK19})
The paper \cite{FK19} uses \cite[Lemmas 3 and 4]{SS13}.
Thus the proofs of \cite{FK19} are unreliable.

Moreover, the Algebraization Lemma is stronger than (the estimation `$\ge$' of)
\cite[Corollary 10 (the first sentence)]{FK19}.\footnote{Indeed, the Algebraization Lemma for a $\Z_2$-embedding $D$, and known \cite[Lemmas 6 and 7]{FK19} trivially imply that result of \cite{FK19}.
Modulo those lemmas and a choice of basis in the homology group, that result of \cite{FK19} is Lemma \ref{l:realtree}.a for $k=1$ (due to Bikeev-Fulek-Kyn{\v{c}}l).}
The result \cite[Corollary 10 (the first sentence)]{FK19} uses \cite[Proposition 9]{FK19} which uses \cite[Lemma 5]{FK19} which is the same as \cite[Lemma 4]{SS13} which uses the Algebraization Lemma.
This looks like a vicious circle.
This is yet another motivation for appearance of reliable proofs (although I know how to rewrite the argument to avoid vicious circle).

\textbf{(d)} (recovery of results)
The results of \cite{FK19} are recovered by \cite[Theorem 1.1]{Bi21} (partly attributed to Fulek-Kyn{\v{c}}l).

The Algebraization Lemma is recovered by the implication ($\Rightarrow$) of \cite[Theorem 1.1.b]{Bi21} (modulo the known algebraic lemmas, and for $Y=(y_e)$), or by the proof of this implication in \cite[\S2]{Bi21}, or alternatively by Lemma \ref{l:realtree}.a for $k=1$ (whose proof is slightly different from \cite[\S2]{Bi21}).
Hopefully the block additivity of \cite{SS13} can also be recovered using
\cite[Theorem 1.1]{Bi21} (obtained using ideas of \cite{FK19}), cf. Remark \ref{r:mot}.c.
Also, I have an idea of how to rewrite papers \cite{SS13, FK19} to make the proofs reliable and closer to the initial ideas of \cite{SS13, FK19} than to \cite{Bi21}.
Nothing of these makes the proofs of \cite{SS13, FK19} reliable.
\end{remark}

\small

\begin{remark}\label{r:let}
Here I present my letters to D. \v Stefankovi\v c (analogous letter was earlier sent to M. Schaefer; Cc M. Schaefer), to R. Fulek and J. Kyn\v cl.

\textbf{(a)} (Sep 2, 2024) Dear Daniel,

Hope you are fine and healthy.

The following message might seem to you way too direct and formal.
If so, could you shortly contact Marcus on our latest discussion with him, of which this letter is a user-oriented result
(and on our discussions with him throughout the years).
Hopefully we could treat these questions in a professional way.

My purpose is to give a proper introduction to  arXiv:2112.06636.
In particular,

* to give proper credit to papers on $\Z_2$-embeddings of graphs (see Remarks 1.1.6.abcd).

* to inform a reader if some published proofs are not reliable (see arXiv:2101.03745).

Attached please find the planned update of arXiv:2112.06636.
See Remark 1.1.8a and \S2.5.
I would be grateful if you could show that some specific sentences of Remark 2.5.1.ab are incorrect (or unclear).
I am willing to make corrections corresponding to your remarks.

Could you let me know if you plan (in the forthcoming future) to make improvements upon Remark 2.5.1?

If yes, then I would be glad to refer to arXiv update of your paper, and omit critical remarks on the earlier version (see arXiv:2101.03745, Remark 2.3).

If no, then (possibly after working on your remarks and sending you the revision), I would be glad to publish your and Marcus' reply to (possibly modified) Remark 2.5.1, or a reference to such a reply.

I would be glad to do that even if I disagree with your reply.
Please send me your text for publication, stating that you perpetually release copyright for this text
(or a reference to a text involving `perpetual release of copyright' statement).

Best Regards, Arkadiy.

\textbf{(b)} (Sep 8, 2024) Dear Rado, Dear Jan (Honza)

Hope you are fine and healthy.

My purpose is to give a proper introduction to  arXiv:2112.06636.
In particular,

* to give proper credit to papers on $\Z_2$-embeddings of graphs (see Remarks 1.1.8.abcd).

* to inform a reader if some published proofs are not reliable (see arXiv:2101.03745).

Attached please find the planned update of arXiv:2112.06636.
See Remark 1.1.8a and \S2.5.
Remark 2.5.1.c concerns your paper [FK19].

Could you let me know if you plan (in the forthcoming future)
to update arXiv version of [FK19] to make it independent of [SS13]?
Your paper [FK19] is written so nicely that it would be easy for you to do that.
You implicitly prove the Algebraization Lemma.
It would be easy for you to explicitly state and prove its improvement required for lemmas from  [SS13] used in your paper, and to reprove the lemmas.
If you are in a hurry, then you can just refer to arXiv:2012.12070v2 for a rigorous proof of the required $\Z_2$-embeddability criterion (which is partly attributed to you there).
The main result of [FK19] is anyway not the $\Z_2$-embeddability criterion, but its application to the quadratic estimation for $K_{n,n}$.

If yes, then I would be glad to refer to arXiv update of your paper, and omit critical remarks on the earlier version (see arXiv:2101.03745, Remark 2.3).

If no, then I would be glad to publish your reply to Remark 2.5.1.c, or a reference to such a reply.

I would be glad to do that even if I disagree with your reply.
Please send me your text for publication, stating that you perpetually release copyright for this text
(or a reference to a text involving `perpetual release of copyright' statement).

Best Regards, Arkadiy.

\textbf{(c)} (Sep 15, 2024) Dear Daniel, Dear Marcus, Dear Rado, Dear Jan (Honza),

Attached please find the planned update of arXiv:2112.06636 [added later: this is v4].
See Remark 1.1.8a and \S2.5.

On one hand, the readers would be grateful to see your reply to Remark 2.5.1.
In order to publish your reply on arXiv I need your text for publication, and your statement that you perpetually release copyright for this text (or a reference to a text involving `perpetual release of copyright' statement).

On the other hand, I will not inform you on further development unless you express your interest.

Best Regards, Arkadiy.
\end{remark}

}

{\it Books, surveys and expository papers in this list are marked by the stars.}

\smallskip
Moscow Institute of Physics and Technology,
Independent University of Moscow.
Email: \texttt{skopenko@mccme.ru}.
\linebreak
\texttt{https://users.mccme.ru/skopenko/}.

\end{document}